\tolerance=2000

\documentclass[12pt,article]{amsart}
\usepackage{amssymb}
\usepackage{amsthm}
\usepackage{amsmath}

\usepackage{geometry}
 \geometry{
 a4paper,
 total={210mm,297mm},
 left=20mm,
 right=20mm,
 top=20mm,
 bottom=20mm,
 }

\newtheorem{thm}{Theorem}
\newtheorem{lem}[thm]{Lemma}
\newtheorem{claim}{Claim}
\newtheorem{prop}[thm]{Proposition}
\newtheorem{cor}[thm]{Corollary}

\newcommand\diam{\textup{diam}\,}
\newcommand\diag{\textup{diag}\,}
\newcommand\rng{\textup{rng}\,}
\newcommand\ran{\textup{rng}\,}
\newcommand\tr{\textup{Tr}\,}
\newcommand\re{\textup{Re}}
\newcommand\im{\textup{Im}}

\newcommand{\C}{\mathbb{C}}

\newcommand{\R}{\mathbb{R}}

\newcommand{\T}{\mathbb{T}}
\newcommand{\tg}{\mathrm{tg}\,}
\newcommand{\ctg}{\mathrm{ctg}\,}

\newcommand{\irB}{\mathcal{B}}

\newcommand{\irH}{\mathcal{H}}

\newcommand{\irP}{\mathcal{P}}

\newcommand{\irS}{\mathcal{S}}

\newcommand{\irU}{\mathcal{U}}

\begin{document}

\title[Maps on classes of Hilbert space operators preserving measure of comm.]{Maps on classes of Hilbert space operators preserving measure of commutativity}
\author{Gy\"orgy P\'al Geh\'er}
\address{Bolyai Institute, University of Szeged, H-6720 Szeged, Aradi v\'ertan\'uk tere 1, Hungary}
\address{MTA-DE "Lend\"ulet" Functional Analysis Research Group, Institute of Mathematics, University of Debrecen, H-4010 Debrecen, P.O.~Box 12, Hungary}
\email{gehergy@math.u-szeged.hu}

\author{Gerg\H o Nagy}
\address{MTA-DE "Lend\" ulet" Functional Analysis Research Group, Institute of Mathematics, University of Debrecen, H-4010 Debrecen, P.O.~Box 12, Hungary}
\email{nagyg@science.unideb.hu}

\begin{abstract}
In this paper first we give a partial answer to a question of L. Moln\'ar and W. Timmermann. Namely, we will describe those linear (not necessarily bijective) transformations on the set of self-adjoint matrices which preserve a unitarily invariant norm of the commutator. After that we will characterize those (not necessarily linear or bijective) maps on the set of self-adjoint rank-one projections acting on a two-dimensional complex Hilbert space which leave the latter quantity invariant. Finally, this result will be applied in order to obtain a description of such bijective preservers on the unitary group and on the set of density operators.
\end{abstract}

\maketitle


\begin{center}
AMS (2010): Primary: 47B49. Secondary: 15A86.

Keywords: Preserver problems, Hilbert space operators, commutativity, unitarily invariant norm.
\end{center}


\section{Introduction and statement of the results}

The relation of commutativity appears in most fields of mathematics and therefore the investigation of commutativity preserving transformations is a relevant problem. Such preservers on a certain class of operators are extremely important because they are connected to quantum mechanics. Namely, in the mathematical formalism of quantum mechanics a complex (and in most cases separable) Hilbert space can be associated to every quantum system. The so called observables correspond to self-adjoint operators, the pure states or rays are identified with self-adjoint rank-one projections, the mixed states are represented by density operators. The commutativity of these representing operators has a certain physical meaning.

The structure of mappings that preserve commutativity (usually in both directions) was investigated in many papers for different classes of operators, see for instance: \cite{CJR,MS,N,Se,S,U}. For several classes of normal operators it turned out that such bijections send each element -- up to unitary or antiunitary equivalence -- into a certain bounded Borel function of it. However, it is important to note that usually these results are valid only for Hilbert spaces with at least three dimensions. For example, in a two-dimensional space two self-adjoint operators commute if and only if they are linearly dependent or there exists a real-linear combination of them which equals the identity operator. Therefore in two dimensions many transformations exist on the set of self-adjoint operators which preserve commutativity in both directions.

However, if we pose a stronger condition on our transformation rather than simply the preservation of commutativity in both directions, we shall obtain more regular forms. One natural possibility is to consider the operator norm of the commutator and investigate such transformations that preserve this quantity. Concerning this kind of preservers, recently, L.~Moln\'ar and W.~Timmermann proved a theorem which is stated below, but before that we give some auxiliary definitions. Let $\irH$ denote a complex and at least two-dimensional Hilbert space. The symbols $\irB(\irH), \irB_s(\irH), \irP_1(\irH), \irU(\irH), \irS(\irH)$ will denote the set of bounded linear operators, bounded self-adjoint operators, self-adjoint rank-one projections, unitary operators and density operators acting on $\irH$, respectively. We note that a positive operator $A$ is said to be a density operator if $\tr A = 1$ where $\tr$ stands for the trace. The operator norm of an element $A\in\irB(\irH)$ will be denoted by $\|A\|$, and the vector norm of a vector $h\in\irH$ by $\|h\|$. A norm $|||\cdot|||$ on $\irB(\irH)$ is called unitarily invariant if $|||UAV||| = |||A|||$ is satisfied whenever $A\in\irB(\irH)$ and $U,V\in\irU(\irH)$. The reader can find a characterization of all unitarily invariant norms on matrices in \cite[Section IV.2.]{Bha}, which will be used many times throughout the paper. The commutator of two operators $A,B$ is the operator $AB-BA$ which is usually denoted by $[A,B]$. The previously mentioned result of Moln\'ar and Timmermann reads as follows.

\smallskip\smallskip

\noindent{\bf Theorem} (L. Moln\'{a}r and W. Timmermann \cite{MT}) \emph{Let $\irH$ be separable with $\dim \irH>2$. Assume $\phi\colon \irB_s(\irH) \to \irB_s(\irH)$ is a bijection such that}
\[
\big\|[\phi(A),\phi(B)]\big\| = \big\|[A,B]\big\|\quad(A,B \in \irB_s(\irH)).
\]
\emph{Then there exists either a unitary or an antiunitary operator $U$ on $\irH$, and functions $f\colon \irB_s(\irH) \to \R$ and $\tau\colon \irB_s(\irH)\to \{-1, 1\}$ such that}
\[
\phi(A) = \tau(A)UAU^* + f(A)I\quad(A \in \irB_s(\irH)).
\]

\smallskip\smallskip

At the end of their paper the authors point out that the question whether the same conclusion holds in a two-dimensional space still remains open. They also mention that in this case a characterization for linear and bijective preservers $\phi$ was found. However, they did not publish it because of the length of the proof and its extensive use of computation. Furthermore, from the characterization it was not clear whether such preservers have the same structure as in the above theorem. Another question which arises naturally concerns the conclusion of the above theorem if we drop the bijectivity condition or we replace the operator norm with some unitarily invariant norm. Their technique cannot be applied in these cases, since it uses \cite[Corollary 2]{MS2} which is valid only if the dimension is at least three and the transformation is bijective.

In the present paper first we intend to contribute to these questions. We will describe those linear transformations on $\irB_s(\irH)$ for finite-dimensional spaces $\irH$ which are not necessarily bijective and preserve a given unitarily invariant norm of the commutator. We note that the proof in the two-dimensional case is much more complicated, since in higher dimensions we can use a theorem of \cite{CJR}, but in two dimensions we have to develop a new technique.

\begin{thm}\label{self-adjointlinear2dimthm}
Suppose that $\dim\irH<\infty$, $|||\cdot|||$ is an arbitrary unitarily invariant norm and $\phi\colon \irB_s(\irH) \to \irB_s(\irH)$ is a (real-)linear transformation such that
\begin{equation}\label{E:noc}
\big|\big|\big|[\phi(A),\phi(B)]\big|\big|\big|=\big|\big|\big|[A,B]\big|\big|\big| \quad (A,B\in\irB_s(\irH)).
\end{equation}
Then there exists either a unitary or an antiunitary operator $U$
on $\irH$ and a linear functional $f\colon \irB_s(\irH) \to \R$ such that
\[
\phi(A) = UAU^* + f(A)I \quad(A \in \irB_s(\irH))
\]
or
\[
\phi(A) = -UAU^* + f(A)I \quad(A \in \irB_s(\irH)).
\]
\end{thm}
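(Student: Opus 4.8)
The plan is to first observe that \eqref{E:noc} forces $\phi$ to preserve commutativity in both directions, and then to split the argument according to whether $\dim\irH\ge 3$ or $\dim\irH=2$. Since a unitarily invariant norm vanishes only at $0$, equation \eqref{E:noc} immediately yields $[\phi(A),\phi(B)]=0\iff[A,B]=0$, so $\phi$ is a real-linear commutativity preserver in both directions. Moreover, if the range of $\phi$ were a commutative set then every commutator would have norm $0$, forcing $\dim\irH=1$; hence that degenerate possibility is excluded from the outset.

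For $\dim\irH\ge 3$ I would invoke the classification of real-linear commutativity preserving transformations of $\irB_s(\irH)$ from \cite{CJR}: since the range of $\phi$ is not commutative, this produces a real scalar $c$, a linear functional $f\colon\irB_s(\irH)\to\R$, and a map $\psi$ of the form $X\mapsto UXU^*$ with $U$ unitary, or $X\mapsto U\overline{X}U^*$ with $U$ antiunitary, such that $\phi(A)=c\,\psi(A)+f(A)I$ for all $A$. It then remains only to pin down $c$. Since $\psi$ is multiplicative on products, one gets $[\phi(A),\phi(B)]=c^2\psi([A,B])$, and because conjugation by a unitary (resp. antiunitary) operator and entrywise complex conjugation all leave the singular values unchanged, hence every unitarily invariant norm unchanged, \eqref{E:noc} forces $c^2\,|||[A,B]|||=|||[A,B]|||$; choosing $A,B$ with $[A,B]\ne0$ gives $c^2=1$, which is exactly the asserted form.

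The substantial work is the case $\dim\irH=2$, where \cite{CJR} is unavailable. Identifying $\irH=\C^2$, I would write $A=a_0I+\vec a\cdot\vec\sigma$ with $\vec\sigma=(\sigma_1,\sigma_2,\sigma_3)$ the Pauli matrices; the identity $(\vec a\cdot\vec\sigma)(\vec b\cdot\vec\sigma)=(\vec a\cdot\vec b)I+i(\vec a\times\vec b)\cdot\vec\sigma$ gives $[A,B]=2i(\vec a\times\vec b)\cdot\vec\sigma$, a matrix whose two singular values both equal $2|\vec a\times\vec b|$, so $|||[A,B]|||=\kappa\,|\vec a\times\vec b|$ with $\kappa=2\,|||I|||>0$. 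By linearity, $\phi(a_0I+\vec a\cdot\vec\sigma)=g(a_0,\vec a)I+(a_0\vec v+T\vec a)\cdot\vec\sigma$ for some linear functional $g$, a vector $\vec v\in\R^3$ and a linear map $T\colon\R^3\to\R^3$, and \eqref{E:noc} becomes $|(a_0\vec v+T\vec a)\times(b_0\vec v+T\vec b)|=|\vec a\times\vec b|$ for all scalars $a_0,b_0$ and all vectors $\vec a,\vec b$. Specializing $\vec b=0,\ b_0=1$ gives $(T\vec a)\times\vec v=0$ for every $\vec a$, which, combined with the instance $a_0=b_0=0$ (which reads $|T\vec a\times T\vec b|=|\vec a\times\vec b|$ and is not identically zero), forces $\vec v=0$; thus the whole condition collapses to $|T\vec a\times T\vec b|=|\vec a\times\vec b|$ for all $\vec a,\vec b$.

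Finally, I would use the identity $(T\vec a)\times(T\vec b)=\operatorname{cof}(T)(\vec a\times\vec b)$, valid for every linear $T$ on $\R^3$, together with the fact that every vector of $\R^3$ is a cross product, to conclude that $\operatorname{cof}(T)$ preserves the Euclidean norm, hence lies in $O(3)$; then $\det(T)^2=|\det\operatorname{cof}(T)|=1$, so $T$ is invertible, $\operatorname{cof}(T)=\det(T)(T^{-1})^{\mathsf T}$, and orthogonality of this matrix forces $T^{\mathsf T}T=I$, i.e. $T\in O(3)$. If $\det T=1$, surjectivity of the covering $SU(2)\to SO(3)$ supplies a unitary $U$ with $U(\vec a\cdot\vec\sigma)U^*=(T\vec a)\cdot\vec\sigma$, and rearranging gives $\phi(A)=UAU^*+f(A)I$ with $f$ linear; if $\det T=-1$, applying this to $-T\in SO(3)$ yields $\phi(A)=-UAU^*+f(A)I$. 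I expect the main obstacle to be precisely this two-dimensional analysis — above all the step that the $\R^3$-part $T$ of $\phi$ is, up to sign, orthogonal, carried out simultaneously with the elimination of the affine shift $\vec v$ — since commutativity alone is far too weak in dimension two (many non-standard linear commutativity preservers exist there), and one genuinely has to exploit the quantitative identity $|||[A,B]|||=\kappa|\vec a\times\vec b|$.
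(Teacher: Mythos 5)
Your proof is correct, and while your treatment of the case $\dim\irH>2$ coincides with the paper's (reduce to two-sided commutativity preservation, invoke \cite{CJR}, and force $c^2=1$), your two-dimensional argument takes a genuinely different and in several respects cleaner route. The paper also begins by noting that $[A,B]$ is traceless and skew-Hermitian, so that every unitarily invariant norm of it is a fixed positive multiple of $\sqrt{\det[A,B]}$ --- which is exactly your identity $|||[A,B]|||=\kappa\,|\vec a\times\vec b|$ in different clothing --- but then proceeds by explicit matrix computation: it normalizes $\phi$ so that the image of $E_{11}$ is diagonal, writes out the images of the four basis matrices entrywise, and extracts the constraints ($w=\pm iu$, $|u|=|w|=1$, $a=d$, $e=h$, $s-t=\pm1$) from four carefully chosen instances of the determinant equation; this yields only a \emph{pointwise} dichotomy $\phi_1(A)\in\{\psi_1(A),\psi_2(A)\}$, which must then be globalized by a Baire category argument. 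Your Pauli-vector formulation replaces all of this with one structural step: after eliminating the shift $\vec v$ (which you do correctly, by testing against $B=I$ and against the nondegeneracy of $|T\vec a\times T\vec b|=|\vec a\times\vec b|$), the condition concerns a single linear map $T$ on $\R^3$, and the cofactor identity $(T\vec a)\times(T\vec b)=\operatorname{cof}(T)(\vec a\times\vec b)$ forces $\operatorname{cof}(T)\in O(3)$ and hence $T\in O(3)$ in one stroke; the dichotomy in the conclusion is then simply the global sign $\det T=\pm1$, so no Baire argument is needed, and the surjectivity of $SU(2)\to SO(3)$ recovers the implementing unitary (in two dimensions the antiunitary alternative is indeed absorbed into the sign, consistently with the theorem as stated). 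The only cosmetic slip is the phrase ``$X\mapsto U\overline{X}U^*$ with $U$ antiunitary'': either $U$ is unitary and one conjugates $\overline{X}$, or $U$ is antiunitary and one conjugates $X$; this does not affect the argument.
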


We would like to point out that in the above theorem the form of $\phi$ is global. We remark that if in this paper we consider a two-dimensional space, we usually identify it with $\C^2$ and the linear operators on $\C^2$ with $2\times 2$ complex matrices in the natural way. The following result describes maps on $\irP_1(\C^2)$ which preserve a given unitarily invariant norm of the commutator. It will play a crucial role in the proof of Theorems \ref{T:unit} and \ref{T:dense}.

\begin{thm}\label{T:proj}
Assume that $|||\cdot|||$ is an arbitrary unitarily invariant norm. Let $\phi\colon \irP_1(\C^2)\to \irP_1(\C^2)$ be a map for which
\begin{equation}\label{E:nocp}
\big|\big|\big|[\phi(P),\phi(Q)]\big|\big|\big|=\big|\big|\big|[P,Q]\big|\big|\big|\quad(P,Q\in \irP_1(\C^2))
\end{equation}
is satisfied. Then there exists a unitary or an antiunitary operator $U$ on $\C^2$ such that for each $P\in \irP_1(\C^2)$ we have
\begin{equation}\label{E:projchoice}
\phi(P) \in \{UPU^*, UP^{\bot}U^*\}.
\end{equation}
\end{thm}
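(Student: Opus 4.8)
The plan is to transfer the whole problem to the Bloch sphere. Each $P\in\irP_1(\C^2)$ is uniquely of the form $P=\tfrac12\big(I+\vec p\cdot\vec\sigma\big)$ with $\vec p$ a unit vector of $\R^3$, where $\vec\sigma=(\sigma_1,\sigma_2,\sigma_3)$ denotes the Pauli matrices; writing $\iota(P)=\vec p$ gives a bijection $\iota\colon\irP_1(\C^2)\to S^2$ with $\iota(P^{\bot})=-\iota(P)$. From the identity $[\vec p\cdot\vec\sigma,\vec q\cdot\vec\sigma]=2i(\vec p\times\vec q)\cdot\vec\sigma$ one obtains $[P,Q]=\tfrac{i}{2}(\iota(P)\times\iota(Q))\cdot\vec\sigma$, which is a nonnegative scalar multiple of a unitary matrix, so
\[
\big|\big|\big|[P,Q]\big|\big|\big|=\tfrac12|||I|||\cdot\big|\iota(P)\times\iota(Q)\big|=\tfrac12|||I|||\sqrt{1-\big(\iota(P)\cdot\iota(Q)\big)^2}.
\]
Since $|||I|||>0$, putting $\psi:=\iota\circ\phi\circ\iota^{-1}\colon S^2\to S^2$ the hypothesis \eqref{E:nocp} becomes exactly $|\psi(x)\cdot\psi(y)|=|x\cdot y|$ for all $x,y\in S^2$. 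Thus it suffices to classify the self-maps $\psi$ of $S^2$ that preserve the absolute value of the inner product.

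For such a $\psi$ the argument is by hand and uses no regularity assumption. Fix the standard basis $e_1,e_2,e_3$ of $\R^3$; from $|\psi(e_i)\cdot\psi(e_j)|=\delta_{ij}$ the triple $\psi(e_1),\psi(e_2),\psi(e_3)$ is an orthonormal basis, so after replacing $\psi$ by $R\circ\psi$ for the unique $R\in O(3)$ with $R\psi(e_i)=e_i$ we may assume $\psi(e_i)=e_i$. Then for any $x=\sum_ix_ie_i$ we have $|\psi(x)\cdot e_i|=|x_i|$, hence $\psi(x)=\sum_i\varepsilon_i(x)x_ie_i$ with signs $\varepsilon_i(x)\in\{-1,1\}$ (irrelevant where $x_i=0$). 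Feeding two points $x,y$ of the coordinate great circle $\{x_3=0\}$ with $x_1,x_2,y_1,y_2\neq0$ into $|\psi(x)\cdot\psi(y)|=|x\cdot y|$ gives $|\,a\,x_1y_1+b\,x_2y_2\,|=|x_1y_1+x_2y_2|$ with $a,b\in\{-1,1\}$, which forces $a=b$, i.e. $\varepsilon_1(x)\varepsilon_2(x)=\varepsilon_1(y)\varepsilon_2(y)$; so $\varepsilon_1\varepsilon_2$ equals a constant $\delta_3$ on that circle, and likewise $\varepsilon_1\varepsilon_3\equiv\delta_2$ on $\{x_2=0\}$ and $\varepsilon_2\varepsilon_3\equiv\delta_1$ on $\{x_1=0\}$. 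Testing an arbitrary $x$ with all coordinates nonzero against points of these three circles then yields $\varepsilon_1(x)\varepsilon_2(x)=\delta_3$ and $\varepsilon_1(x)\varepsilon_3(x)=\delta_2$, so $\psi(x)=\varepsilon_1(x)\,Dx$ with $D=\diag(1,\delta_3,\delta_2)\in O(3)$; a short separate check on the three coordinate great circles and on $\pm e_i$ extends this to every $x\in S^2$. Undoing the normalization, we get $O\in O(3)$ with $\psi(x)\in\{Ox,-Ox\}$ for all $x\in S^2$.

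Finally I would translate back through $\iota$. Conjugation by a unitary of $\C^2$ induces on $S^2$ precisely the rotations (the covering $SU(2)\to SO(3)$ is onto), while the antiunitary conjugation $P\mapsto U\overline{P}\,U^*$ induces precisely the orientation‑reversing isometries, because complex conjugation in the standard basis corresponds to $\diag(1,-1,1)$. Hence there is a unitary or antiunitary $U$ on $\C^2$ with $\iota(UPU^*)=O\,\iota(P)$ for all $P\in\irP_1(\C^2)$. Combining this with $\iota(R^{\bot})=-\iota(R)$ and $(UPU^*)^{\bot}=UP^{\bot}U^*$, the relation $\psi(x)\in\{Ox,-Ox\}$ turns into $\phi(P)\in\{UPU^*,UP^{\bot}U^*\}$ for each $P$, which is \eqref{E:projchoice}.

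The main obstacle is the middle step: extracting a single global orthogonal map from the one scalar identity $|\psi(x)\cdot\psi(y)|=|x\cdot y|$, with neither continuity, injectivity nor surjectivity available. This identity only determines $\psi$ up to the sign change $x\mapsto-x$ at each point — which is exactly why the alternative $UP^{\bot}U^*$ cannot be dropped from the conclusion — so the bookkeeping of these signs, together with the handling of the degenerate points lying on the coordinate planes, is where the care is needed, even though each individual estimate is elementary.
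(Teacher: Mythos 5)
Your proof is correct, but it takes a genuinely different route from the paper's. You pass to the Bloch sphere: writing $P=\tfrac12(I+\vec p\cdot\vec\sigma)$ and using $[\vec p\cdot\vec\sigma,\vec q\cdot\vec\sigma]=2i(\vec p\times\vec q)\cdot\vec\sigma$ together with unitary invariance, you convert \eqref{E:nocp} into the single real condition $|\psi(x)\cdot\psi(y)|=|x\cdot y|$ on $S^2$, i.e.\ into the non-bijective real Wigner problem in dimension three, which you then solve by elementary sign bookkeeping (orthonormality of $\psi(e_i)$, the coordinate identity $|ax_1y_1+bx_2y_2|=|x_1y_1+x_2y_2|\Rightarrow a=b$ when no coordinate vanishes, and propagation of the resulting constants $\delta_i$ from the coordinate great circles to general points). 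The paper instead works directly with $2\times2$ projections: it reduces \eqref{E:nocp} to the trace condition $\tr\phi(P)\phi(Q)\in\{\tr PQ,1-\tr PQ\}$, parametrizes projections by unit vectors $(\cos t,\lambda\sin t)$, and runs a longer sequence of normalizations (Claims 1--3) driven by the parallelogram law, together with a separate reduction of the non-injective case to the injective one via ``locally polynomial maps.'' Your argument needs no injectivity assumption at any point and so avoids that case split entirely; what it buys is a cleaner, more conceptual picture (the problem \emph{is} the real three-dimensional Wigner problem, with the sign ambiguity $x\mapsto-x$ accounting exactly for the $UP^{\bot}U^*$ alternative), at the cost of having to verify the correspondence between $O(3)$ and unitary/antiunitary conjugation, which you do correctly ($SU(2)\to SO(3)$ onto, and entrywise conjugation inducing $\mathrm{diag}(1,-1,1)$). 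All the individual steps check out, including the degenerate points on the coordinate planes and at $\pm e_i$.
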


We would like to clarify that in Theorem \ref{T:proj} the choice of $\phi(P)$ can really vary elementwise. In fact, it is quite easy to see that every map $\phi$ which has the form \eqref{E:projchoice} satisfies \eqref{E:nocp}. A counterpart of the above theorem in higher dimensions will be proven in the next section as Proposition \ref{P:proj3}. A reformulation of Theorem \ref{T:proj} will be also given as Corollary \ref{C:proj}.

The structure of commutativity preserving maps on unitary groups was determined in \cite{MS}. Our next result concerns transformations on $\irU(\irH)$ that preserve the norm of the commutator.

\begin{thm}\label{T:unit}
Suppose that $2\leq \dim\irH \leq \aleph_0$ and $|||\cdot|||$ is an arbitrary unitarily invariant norm. Let $\phi\colon\irU(\irH)\to\irU(\irH)$ be a bijection such that
\begin{equation}\label{E:pres}
\big|\big|\big|[\phi(V_1),\phi(V_2)]\big|\big|\big|=\big|\big|\big|[V_1,V_2]\big|\big|\big|\quad(V_1,V_2\in\irU(\irH))
\end{equation}
holds. Then there exist a unitary or an antiunitary operator $U$ on $\irH$ and a function $\tau\colon\irU(\irH)\to\mathbb{T}$ such that for each $V\in\irU(\irH)$ we have
\[
\phi(V) \in \{ \tau(V)UVU^*, \tau(V)UV^*U^* \}.
\]
Moreover, if $\dim\irH = 2$, then the bijectivity condition can be relaxed to surjectivity.
\end{thm}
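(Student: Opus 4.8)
The plan rests on the observation that $|||\cdot|||$ is a norm, so \eqref{E:pres} forces $\phi$ to preserve commutativity in both directions; in particular $\phi$ maps the centre $\T I$ of $\irU(\irH)$ (the unitaries commuting with everything) onto itself, and it respects commutants. The argument then runs in three stages: (i) extract from $\phi$ a transformation of $\irP_1(\irH)$ that preserves the commutator norm; (ii) feed this to Theorem \ref{T:proj} (if $\dim\irH=2$) or Proposition \ref{P:proj3} (if $\dim\irH\geq 3$) to obtain a unitary or antiunitary $U$; (iii) after replacing $\phi$ by $V\mapsto U^*\phi(V)U$, read off $\phi$ on an arbitrary unitary from its spectral decomposition — first its eigenprojections, then its eigenvalues up to one common unimodular factor and one global choice between $V$ and $V^*$.

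For $\dim\irH=2$ the engine of stage (i) is the identity $[V_1,V_2]=(e^{i\lambda_1}-e^{i\mu_1})(e^{i\lambda_2}-e^{i\mu_2})[P_1,P_2]$, valid for $V_j=e^{i\lambda_j}P_j+e^{i\mu_j}P_j^{\bot}$, whence $|||[V_1,V_2]|||=|e^{i\lambda_1}-e^{i\mu_1}|\,|e^{i\lambda_2}-e^{i\mu_2}|\,|||[P_1,P_2]|||$. Since $\max_{Q\in\irP_1(\C^2)}|||[P,Q]|||=\tfrac12|||I|||$, one gets $\sup_W|||[V,W]|||=|e^{i\lambda}-e^{i\mu}|\cdot|||I|||$ for $V=e^{i\lambda}P+e^{i\mu}P^{\bot}$; hence the set $\irR_1$ of scalar multiples $\nu(I-2P)$ of rank-one reflections is exactly $\{V:\sup_W|||[V,W]|||=2|||I|||\}$ and is therefore $\phi$-invariant, while two elements of $\irR_1$ having equal commutator norm against every unitary must share the same pair $\{P,P^{\bot}\}$ (inspect the zero set of $Q\mapsto|||[P,Q]|||$). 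As rescaling an element of $\irR_1$ affects no commutator, $\phi$ descends to a map on $\irP_1(\C^2)/\{P\sim P^{\bot}\}$, and the identity above shows it preserves $|||[\cdot,\cdot]|||$; any lift of it satisfies the hypothesis of Theorem \ref{T:proj}, producing $U$. Conjugating, we may assume $\phi(I-2P)\in\T(I-2P)$ for every rank-one $P$. Then for non-scalar $V=e^{i\lambda}P+e^{i\mu}P^{\bot}$, $\phi(V)$ commutes with $\phi(I-2P)$, so $\phi(V)=\alpha P+\beta P^{\bot}$ with distinct $\alpha,\beta\in\T$; and $\phi$-invariance of $\sup_W|||[V,W]|||$ forces $|\alpha-\beta|=|e^{i\lambda}-e^{i\mu}|$, that is $\alpha\overline\beta\in\{e^{i(\lambda-\mu)},e^{-i(\lambda-\mu)}\}$, that is $\phi(V)=\tau V$ or $\phi(V)=\tau V^{*}$ for some $\tau\in\T$; scalar $V$ are handled directly, and undoing the conjugation yields $\phi(V)\in\{\tau(V)UVU^{*},\tau(V)UV^{*}U^{*}\}$. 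None of this used injectivity of $\phi$, only that $\{\phi(W):W\in\irU(\irH)\}=\irU(\irH)$, so surjectivity suffices in dimension two, which is the last assertion.

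For $\dim\irH\geq 3$ the commutator no longer factorizes, but the weaker identity $[\,aP_1+bP_1^{\bot},\,cP_2+dP_2^{\bot}\,]=(a-b)(c-d)[P_1,P_2]$ for arbitrary projections still holds. Here one uses the preservation of commutativity together with the classification of commutativity preservers on $\irU(\irH)$ from \cite{MS} — whose substance is that $\phi$ carries maximal abelian subgroups to maximal abelian subgroups, hence induces a bijection $\psi$ of $\irP_1(\irH)$. Granting that $\psi$ preserves $|||[\cdot,\cdot]|||$ (see below), Proposition \ref{P:proj3} supplies $U$; conjugating, $\phi(I-2P)\in\T(I-2P)$ for every rank-one $P$, and for $V=\sum_j e^{i\theta_j}E_j$ the reflections $I-2P$ with $\rng P\subseteq\rng E_j$ force $\phi(V)=\sum_j\alpha_j E_j$, the $\alpha_j$ being pairwise distinct because $\phi^{-1}$ is normalized as well. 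Testing against $W=I-2P$ for a rank-one $P$ whose range sits inside $\rng E_j\oplus\rng E_k$ and is orthogonal to neither summand shows that $|||[V,W]|||$ is a fixed positive multiple of $|e^{i\theta_j}-e^{i\theta_k}|$, with the same multiple for $\phi(V)$; hence $|\alpha_j-\alpha_k|=|e^{i\theta_j}-e^{i\theta_k}|$ for all $j,k$. The elementary fact that finitely many points of $\T$ are determined by their pairwise distances up to a rotation and a reflection of $\T$ then forces $\alpha_j=\tau e^{i\theta_j}$ for all $j$ or $\alpha_j=\tau e^{-i\theta_j}$ for all $j$, with a single $\tau\in\T$ (the sign being unconstrained only at indices where $e^{i\theta_j}=\pm1$, where it is immaterial), and undoing the conjugation completes the proof.

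The step I expect to be the genuine obstacle is stage (i) when $\dim\irH\geq 3$: there the factorization identity and the extremal characterization of $\irR_1$ both collapse — in fact $\|[P,W]\|\leq 1$ for every projection $P$ and unitary $W$, so in the operator norm $\sup_W\|[\,\cdot\,,W]\|$ cannot separate rank-one reflections from higher-rank ones — so the bijection $\psi$ of $\irP_1(\irH)$ has to be built from the commutativity structure, and one must then show both that $\phi(I-2P)$ is really a scalar multiple of $I-2P'$ (not merely of the form $\alpha P'+\beta P'^{\bot}$) and that $P\mapsto P'$ preserves the commutator norm, playing the identity $|||[I-2P,I-2Q]|||=4|||[P,Q]|||$ against the extremal behaviour of $|||[P,Q]|||$ over $\irP_1(\irH)$. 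Everything downstream of that is bookkeeping.
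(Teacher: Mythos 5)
Your two-dimensional argument is sound and runs parallel to the paper's: identifying the scalar multiples of rank-one reflections as the maximizers of $V\mapsto\sup_W|||[V,W]|||$ is exactly the paper's computation $\theta(V)=c\sqrt{4-|\tr V|^2}$, the descent to $\irP_1(\C^2)$ and the appeal to Theorem \ref{T:proj} are the same, and your explicit eigenvalue comparison at the end replaces the paper's Lemma \ref{L:unit} (which shows, via a polarization argument borrowed from \cite{MT}, that $|||[V_1,P]|||=|||[V_2,P]|||$ for all $P\in\irP_1(\irH)$ forces $V_2\in\{zV_1,zV_1^*\}$). That substitution is legitimate in dimension two, and you correctly observe that only surjectivity is used there.

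The higher-dimensional half has a genuine gap, and it is the one you flagged yourself. You read \cite{MS} as saying only that $\phi$ permutes maximal abelian subgroups, and from this you want to induce a bijection of $\irP_1(\irH)$ and to show that $\phi$ carries each set $\T(I-2P)$ with $P$ of rank one onto a set of the same kind. Neither step is supplied: a maximal abelian subgroup of $\irU(\irH)$ corresponds to a maximal abelian subalgebra, not to a rank-one projection (for $\dim\irH=\aleph_0$ it may contain no minimal projections at all), and your own remark that $\sup_W\|[P,W]\|\leq 1$ for every projection shows that the extremal device which worked in dimension two cannot single out rank-one reflections here. The paper closes exactly this hole by invoking the full strength of the Moln\'ar--\v Semrl classification: \cite{MS} yields a unitary or antiunitary $U$ with $\phi(V)=Uf_V(V)U^*$ for a Borel function $f_V$ on $\sigma(V)$, so that $\phi(2P-I)$ is automatically of the form $\alpha UPU^*+\beta I$, and a one-line commutator-norm computation forces $|\alpha|=2$, i.e. preservation of rank-one reflections up to scalars; everything else then reduces to Lemma \ref{L:unit}. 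There is a second, independent defect in your final step: the bookkeeping for $V=\sum_j e^{i\theta_j}E_j$ via pairwise chordal distances presupposes that $V$ has pure point spectrum, whereas for $\dim\irH=\aleph_0$ a unitary may have continuous spectrum and no eigenprojections whatsoever. The paper's Lemma \ref{L:unit} is deliberately spectrum-free to cover this case, and you would need it, or a substitute, to finish.
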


The second author described the structure of those bijections on $\irS(\irH)$ which preserve a special unitarily invariant norm of the commutator when $\dim\irH>2$ (see \cite[Theorem 2 and 3]{N}). Our final result is an extension of that and it reads as follows.

\begin{thm}\label{T:dense}
\begin{itemize}
\item[(a)] Suppose that $2\leq\dim\irH<\infty$ and $|||\cdot|||$ is an arbitrary unitarily invariant norm. Let $\phi\colon\irS(\irH)\to\irS(\irH)$ be a bijection with the following property:
\begin{equation}\label{E:pres_dense}
\big|\big|\big|[\phi(A),\phi(B)]\big|\big|\big|=\big|\big|\big|[A,B]\big|\big|\big| \quad (A,B\in\irS(\irH)).
\end{equation}
Then there is a unitary or an antiunitary operator $U$ on $\irH$ such that
\[
\phi(A) \in \left\{ UAU^*, \frac{2}{\dim\irH}I-UAU^*\right\} \quad (A\in\irS(\irH)).
\]
Moreover, if $\dim\irH = 2$, then the bijectivity condition can be relaxed to surjectivity.

\item[(b)] Suppose that $\dim\irH=\infty$ and $\phi\colon\irS(\irH)\to\irS(\irH)$ is a bijection such that \eqref{E:pres_dense} is valid. Then there is a unitary or an antiunitary operator $U$ on $\irH$ such that
\[
\phi(A)=UAU^*\quad(A\in\irS(\irH)).
\]
\end{itemize}
\end{thm}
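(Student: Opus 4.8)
The plan is to reduce both parts to the already-established Theorem \ref{T:proj} and Proposition \ref{P:proj3} by first locating the rank-one projections inside $\irS(\irH)$ in a way that is recognizable purely from the ``measure of commutativity'' data, i.e.\ from the function $(A,B)\mapsto|||[A,B]|||$. For part (a) with $\dim\irH=n<\infty$, the key observation is that the scaled projections $\tfrac1n P$ with $P\in\irP_1(\irH)$, or more precisely the pure states, sit at the extreme points of the convex body $\irS(\irH)$, and extremality should be detectable via the commutator norm: a density operator $A$ is (a scalar multiple of) a rank-one projection precisely when the set $\{\,|||[A,B]|||: B\in\irS(\irH)\,\}$ is as large as possible in an appropriate sense, or alternatively one characterizes the pairs $(P,Q)$ of orthogonal rank-one projections by the fact that $|||[\tfrac1n P,\tfrac1n Q]|||$ attains the maximal value among commutators of extreme points. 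Since $\phi$ is a bijection preserving $|||[\cdot,\cdot]|||$, it must map this distinguished subset onto itself, hence it restricts to a bijection $\psi$ of (essentially) $\irP_1(\irH)$, after the harmless affine rescaling $P\leftrightarrow\tfrac1nP$, satisfying the hypothesis of Theorem \ref{T:proj} (if $n=2$) or Proposition \ref{P:proj3} (if $n\geq 3$). Applying that result yields a unitary or antiunitary $U$ with $\psi(P)\in\{UPU^*,UP^\bot U^*\}$ for all $P$; in the case $n\geq 3$ the proposition presumably forces the choice to be uniform (no $P^\bot$ option, or a globally consistent one), while for $n=2$ the two options $UPU^*$ and $I-UPU^*=UP^\bot U^*$ are genuinely both possible, matching the stated conclusion with $\tfrac{2}{\dim\irH}I-UAU^*$ reducing to $I-UAU^*$ in two dimensions.

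Having pinned down $\phi$ on the scaled rank-one projections, the next step is to propagate the formula to all of $\irS(\irH)$. Replacing $\phi$ by $A\mapsto U^*\phi(A)U$ (and, in the antiunitary case, composing with transpose) we may assume $\phi$ fixes every rank-one projection up to the $P\leftrightarrow P^\bot$ ambiguity. I would then show that a general $A\in\irS(\irH)$ is determined by the quantities $|||[A,P]|||$ as $P$ ranges over $\irP_1(\irH)$: indeed for a fixed orthonormal basis and $P=P_h$ the value $|||[A,P_h]|||$ controls the off-diagonal structure of $A$ relative to $h$, and letting $h$ vary recovers $A$ up to adding a multiple of $I$ — but the trace-one normalization removes that freedom. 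Thus $\phi(A)$ is forced to equal either $A$ or its ``reflection'' through the center $\tfrac1n I$ of $\irS(\irH)$; a short connectedness/consistency argument (the set $\irS(\irH)$ is connected and the two candidate maps $A\mapsto A$, $A\mapsto\tfrac2nI-A$ are both isometries for the commutator norm that agree only on the center) shows the sign can be chosen globally or elementwise exactly as in the statement. For $n=2$ the relaxation from bijectivity to surjectivity is inherited from the corresponding relaxation already available in Theorem \ref{T:proj}/Theorem \ref{T:unit}: surjectivity of $\phi$ still forces surjectivity onto the extreme points, which is all that Theorem \ref{T:proj} requires.

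For part (b), $\dim\irH=\infty$, the same strategy applies but the conclusion is cleaner: the reflection $A\mapsto cI-A$ is simply unavailable because $cI$ is never a trace-class (indeed never a bounded-trace) operator when the space is infinite-dimensional, so the center of $\irS(\irH)$ degenerates and only $\phi(A)=UAU^*$ survives. Concretely, one again identifies the scaled rank-one projections — now genuinely the extreme points of $\irS(\irH)$ — as those density operators $P$ for which $\sup_{Q}|||[P,Q]|||$ behaves extremally, applies Proposition \ref{P:proj3} in the infinite-dimensional case to get $\phi|_{\irP_1(\irH)}=\mathrm{Ad}_U$ (the $P^\bot$ option being automatically excluded in dimension $\geq 3$, and a fortiori in infinite dimension), and then reconstructs $\phi$ on all of $\irS(\irH)$ from the commutator-norm distances to projections; the absence of a fixed reference multiple of $I$ eliminates even the sign ambiguity.

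The main obstacle I anticipate is the first reduction step: giving a purely metric (commutator-norm) characterization of the scaled rank-one projections inside $\irS(\irH)$ that is robust across all unitarily invariant norms $|||\cdot|||$ simultaneously. One cannot appeal to convex-geometric extremality directly, since $\phi$ is not assumed affine; one must instead express ``$A$ is rank one'' through equalities and inequalities among values $|||[A,B]|||$, and verify these are preserved. A likely route is to fix two operators $B_1,B_2$ realizing a known extremal commutator value and argue that the elements maximizing $|||[\cdot,B_i]|||$ are precisely the scaled projections (or a set canonically containing and recoverable from them); making this work uniformly in $|||\cdot|||$, and handling the diagonal/additive-identity ambiguity correctly under the trace constraint, is where the real care is needed. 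The propagation step and the finite-vs-infinite dichotomy are then comparatively routine once the projection case is in hand.
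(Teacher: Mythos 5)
Your overall strategy coincides with the paper's: detect $\irP_1(\irH)$ inside $\irS(\irH)$ via the commutator-norm data, invoke Theorem~\ref{T:proj} or Proposition~\ref{P:proj3} on the restriction, and then recover a general $A$ from the quantities $|||[A,x\otimes x]|||$, with the trace-one and positivity constraints eliminating the residual $\lambda I\pm A$ freedom (this is exactly how the paper gets $\psi(A)=A$ in infinite dimension and $\psi(A)\in\{A,(2/\dim\irH)I-A\}$ in finite dimension). The step you flag as the main obstacle is, however, a genuine gap in your write-up, and it is worth recording how the paper closes it: for $A\in\irS(\irH)$ one has $\sup_B|||[A,B]|||=\sup_{\|x\|=1}|||[A,x\otimes x]|||=c\sup_{\|x\|=1}\sqrt{\langle A^2x,x\rangle-\langle Ax,x\rangle^2}=\tfrac{c}{2}\diam\sigma(A)$ (the last equality from \cite[Lemma 2.6.5]{Ml1}), and among density operators $\diam\sigma(A)=1$ holds precisely for rank-one projections. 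This single formula is uniform in the unitarily invariant norm (only the constant $c$ from \eqref{E:c} depends on it) and requires only surjectivity of $\phi$, which is why the $\dim\irH=2$ relaxation is free. Your alternative suggestion of characterizing \emph{orthogonal} pairs of projections by a \emph{maximal} commutator value would fail outright: orthogonal projections commute, so they minimize, not maximize, $|||[P,Q]|||$.

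Two smaller slips: the operators $\tfrac1n P$ are not density operators (their trace is $\tfrac1n$), so there is no ``harmless rescaling''--- the extreme points of $\irS(\irH)$ are the projections $P\in\irP_1(\irH)$ themselves, and these are literally what $\phi$ permutes. And in the propagation step, knowing $\langle A^2x,x\rangle-\langle Ax,x\rangle^2$ for all unit $x$ determines $A$ only up to $\lambda I+\tau A$ with $\tau\in\{-1,1\}$ (this is \cite[Proposition]{MT}), not merely up to an additive multiple of $I$; your subsequent ``reflection through the center'' correctly accounts for the sign, but no connectedness or global-consistency argument is needed or available, since the conclusion of the theorem genuinely permits an elementwise choice between $UAU^*$ and $\tfrac{2}{\dim\irH}I-UAU^*$.
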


In Theorems \ref{T:unit} and \ref{T:dense} we assume bijectivity if the dimension is greater than two. The reason is that in the proof we present such techniques which use the bijectivity property of the mapping.


\section{Proofs}\label{S:P}

We begin with the proof of the modified linear Moln\'ar-Timmermann preserver problem.

\begin{proof}[Proof of Theorem \ref{self-adjointlinear2dimthm}]
According to the dimension of $\irH$ we divide our argument into two cases.

\smallskip\smallskip

\noindent CASE I. \emph{When $\dim\irH>2$.} \\ Since $\phi$ preserves commutativity in both directions, by \cite[Theorem 2]{CJR} we obtain that there is a unitary or an antiunitary operator $U$ on $\irH$, a linear functional $f\colon\irB_s(\irH)\to\R$ and a number $c\in\R$ such that
\[
\phi(A)=cUAU^*+f(A)I\quad(A\in\irB_s(\irH))
\]
holds. A simple calculation shows that the transformation $A\mapsto U^*AU$ preserves the quantity $\big|\big|\big|[A,B]\big|\big|\big|$ $(A,B\in\irB_s(\irH))$. Since this is true also for $\phi$, it holds for their composition which is precisely the mapping $A\mapsto cA+f(A)I\ (A\in\irB_s(\irH))$. We deduce that
\[
c^2\big|\big|\big|[A,B]\big|\big|\big|=\big|\big|\big|[A,B]\big|\big|\big|\quad(A,B\in\irB_s(\irH))
\]
is valid and therefore $c\in\{-1,1\}$. Hence we conclude that $\phi$ is of the desired form.

\smallskip\smallskip

\noindent CASE II. \emph{When $\dim\irH=2$.} \\ Here, the main idea is as follows. Let $A,B\in\irB_s(\C^2)$. It can be observed that $[A,B]$ is skew-Hermitian (hence normal) and that $\tr [A,B] = 0$. Consequently, the eigenvalues of $[A,B]$ are $\pm i\sqrt{\det{[A,B]}}$, and therefore we easily obtain that $\big|\big|\big|[A,B]\big|\big|\big| = c\sqrt{\det{[A,B]}}$ holds with a number $c>0$ (which is independent of $A$ and $B$, see \cite[Section IV.2.]{Bha}). This implies that we have
\begin{equation}\label{E:detcom}
\det[\phi(A),\phi(B)]=\det[A,B] \quad (A,B\in\irB_s(\C^2))
\end{equation}
if and only if \eqref{E:noc} is satisfied. This observation will significantly simplify the proof and it will be used throughout it.

Let us observe that $\phi$ preserves commutativity in both directions. Therefore $\phi(\irB_s(\C^2)))$ is not commutative, which implies that $\phi(I) = \varphi I$ is valid with some $\varphi \in \R$. Next, by considering the map
\[
\phi_1(\cdot) = U\phi(\cdot)U^*
\]
where $U$ is an appropriate unitary matrix, we see that $\phi_1$ obviously satisfies \eqref{E:detcom}, moreover
\[
\phi_1\left(
\begin{matrix}
1 & 0 \\
0 & 0
\end{matrix}
\right) = \left(
\begin{matrix}
s & 0 \\
0 & t
\end{matrix}
\right) =: A_1
\]
holds with some $s,t\in\R$. Of course $\phi_1(I) = \varphi I$ is valid as well. Using the linearity of $\phi_1$, we obtain
\[
\phi_1\left(
\begin{matrix}
0 & 0 \\
0 & 1
\end{matrix}
\right) = \varphi I - \left(
\begin{matrix}
s & 0 \\
0 & t
\end{matrix}
\right) =: A_2.
\]
Set
\[
\phi_1\left(
\begin{matrix}
0 & 1 \\
1 & 0
\end{matrix}
\right) = \left(
\begin{matrix}
a & u \\
\overline{u} & d
\end{matrix}
\right) =: A_3
\]
and
\[
\phi_1\left(
\begin{matrix}
0 & i \\
-i & 0
\end{matrix}
\right) = \left(
\begin{matrix}
e & w \\
\overline{w} & h
\end{matrix}
\right) =: A_4,
\]
where $a,d,e,h\in\R$, $u,w\in\C$, and let
\[
b=\re u,\ c=\im u,\ f=\re w,\ g=\im w.
\]
From the above definitions we infer that
\[
\phi_1\left(
\begin{matrix}
\alpha & \beta + i\gamma \\
\beta - i\gamma & \delta
\end{matrix}
\right) = \alpha A_1 + \delta A_2 + \beta A_3 + \gamma A_4
\]
is satisfied for any $\alpha,\beta,\gamma,\delta \in \R$.

Our strategy is that we will consider the equation \eqref{E:detcom} in four special cases. This will give us some information about $\phi_1$. First, let us write the equality below which is satisfied by every number $\alpha, \beta, \gamma,\delta\in\R$:
\[
|\beta + i\gamma|^2(\alpha-\delta)^2 = \det \left[
\left(
\begin{matrix}
\alpha & 0 \\
0 & \delta
\end{matrix}
\right),
\left(
\begin{matrix}
0 & \beta + i\gamma \\
\beta - i\gamma & 0
\end{matrix}
\right) \right]
\]
\[
= \det \left[
\alpha A_1 + \delta A_2, \beta A_3 + \gamma A_4
\right] = (s - t)^2 |u \beta + w \gamma|^2 (\alpha - \delta)^2.
\]
It follows that
\[
|\beta + i\gamma| = |s - t||u \beta + w \gamma| \quad(\beta,\gamma\in\R).
\]
Hence $s\neq t$ and the map $z\mapsto|s - t|(u\re z+w\im z)\ (z\in\C)$ is a real-linear isometry. Therefore $w = \pm i u$ and consequently $\overline{u}w + u\overline{w} = 0$ holds, moreover,
\begin{equation}\label{E:norm}
|u|=|w|=\frac{1}{|s - t|}.
\end{equation}

Next, applying \eqref{E:norm} and the orthogonality of $u$ and $w$, we deduce that
\[
4 = \det \left[\left(
\begin{matrix}
0 & 1 \\
1 & 0
\end{matrix}\right),
\left(\begin{matrix}
0 & i \\
-i & 0
\end{matrix}\right)\right] = \det[A_3,A_4]
\]
\[
= \det\left(\begin{matrix}
-w \overline{u} + u \overline{w} & -e u + h u + a w - d w \\
e \overline{u} - h \overline{u} - a \overline{w} + d \overline{w} & w \overline{u} - u \overline{w}
\end{matrix}\right)
\]
\[
= -(w \overline{u} - u \overline{w})^2+|e u - h u - a w + d w|^2 = \frac{4}{(s-t)^4} + |e u - h u - a w + d w|^2
\]
\[
= \frac{4}{(s-t)^4} + (e-h)^2 |u|^2 - (a-d)(e-h)(w \overline{u} + u \overline{w}) + (a-d)^2 |w|^2
\]
\[
= \frac{4}{(s-t)^4} + \frac{(e-h)^2+(a-d)^2}{(s-t)^2},
\]
whence we obtain
\begin{equation}\label{E:E1}
4(s-t)^2 - \frac{4}{(s-t)^2} = (e-h)^2+(a-d)^2.
\end{equation}

In the third case, using also \eqref{E:E1} we compute the following:
\[
5 = \det \left[\left(
\begin{matrix}
1 & 1 \\
1 & 0
\end{matrix}\right),
\left(\begin{matrix}
0 & i \\
-i & 0
\end{matrix}\right)\right] = \det[A_1+A_3,A_4]
\]
\[
= \det\left(
\begin{matrix}
-w \overline{u} + u \overline{w} & -e u + h u + (a + s) w - (d + t) w\\
e \overline{u} - h \overline{u} - (a + s) \overline{w} + (d + t) \overline{w} & w \overline{u} - u \overline{w}
\end{matrix}
\right)
\]
\[
= \frac{4}{(s-t)^4} + (e-h)^2 |u|^2 + (a-d)^2 |w|^2 + 2 (a-d) (s-t)|w|^2 + (s-t)^2 |w|^2
\]
\[
= \frac{4}{(s-t)^4} + \frac{(e-h)^2 + (a-d)^2 + 2 (a-d) (s-t) + (s-t)^2}{(s-t)^2}
\]
\[
= \frac{4}{(s-t)^4} + \frac{5(s-t)^2 - \frac{4}{(s-t)^2} + 2 (a-d) (s-t)}{(s-t)^2} = 5 + 2 \frac{a-d}{s-t},
\]
which implies $a = d$.

Finally, very similarly we obtain the following:
\[
5 = \det \left[\left(
\begin{matrix}
0 & 1 \\
1 & 0
\end{matrix}\right),
\left(\begin{matrix}
1 & i \\
-i & 0
\end{matrix}\right)\right] = \det[A_3,A_1+A_4]
\]
\[
= \det\left(
\begin{matrix}
-w \overline{u} + u \overline{w} & -(e + s) u + (h + t) u + a w - d w\\
(e + s) \overline{u} - (h + t) \overline{u} - a \overline{w} + d \overline{w} & w \overline{u} - u \overline{w}
\end{matrix}
\right)
\]
\[
= \frac{4}{(s-t)^4} + (e-h)^2 u \overline{u} + 2 (e-h) (s-t) u \overline{u} + (s-t)^2 u \overline{u} + (a-d)^2 w \overline{w} = 5 + 2 \frac{e-h}{s-t},
\]
hence $e = h$ follows. Using what we have shown in the last three cases, we conclude that
\[
4(s-t)^2 - \frac{4}{(s-t)^2} = (e-h)^2+(a-d)^2 = 0,
\]
therefore $s-t = \pm 1$, which -- by \eqref{E:norm} -- yields that $|u|=|w|=1$.

Now for $\alpha,\beta,\gamma,\delta\in\R$ we define the following matrices:
\[
N_{+}(\alpha,\beta,\gamma,\delta) = \alpha \left(
\begin{matrix}
0 & 0 \\
0 & 1
\end{matrix}
\right) + \delta
\left(\begin{matrix}
1 & 0 \\
0 & 0
\end{matrix}\right)
 + \beta
\left(\begin{matrix}
0 & u \\
\overline{u} & 0
\end{matrix}\right)+ \gamma
\left(\begin{matrix}
0 & w \\
\overline{w} & 0
\end{matrix}\right)
\]
and
\[
N_{-}(\alpha,\beta,\gamma,\delta) = \alpha \left(
\begin{matrix}
0 & 0 \\
0 & -1
\end{matrix}
\right) + \delta
\left(\begin{matrix}
-1 & 0 \\
0 & 0
\end{matrix}\right)
 + \beta
\left(\begin{matrix}
0 & u \\
\overline{u} & 0
\end{matrix}\right)+ \gamma
\left(\begin{matrix}
0 & w \\
\overline{w} & 0
\end{matrix}\right)
\]
We also define the linear functional
\[
f\colon \irB_s(\C^2)\to\R, \quad
f\left(
\begin{matrix}
\alpha & \beta + i\gamma \\
\beta - i\gamma & \delta
\end{matrix}
\right) = (\alpha s + \beta a + \gamma e - \delta (t - \varphi))I.
\]
By what we have proven so far, the relation
\begin{equation}\label{E:MT2DPaf}
\begin{gathered}
\phi_1\left(
\begin{matrix}
\alpha & \beta + i\gamma \\
\beta - i\gamma & \delta
\end{matrix}
\right) =
\alpha \left(
\begin{matrix}
s & 0 \\
0 & t
\end{matrix}
\right) +\delta\varphi I - \delta
\left(\begin{matrix}
s & 0 \\
0 & t
\end{matrix}\right)
 + \beta
\left(\begin{matrix}
a & u \\
\overline{u} & a
\end{matrix}\right)
+ \gamma
\left(\begin{matrix}
e & w \\
\overline{w} & e
\end{matrix}\right) \\
\in\left\{N_+(\alpha,\beta,\gamma,\delta) + f\left(
\begin{matrix}
\alpha & \beta + i\gamma \\
\beta - i\gamma & \delta
\end{matrix}
\right)I,
N_-(\alpha,\beta,\gamma,\delta) + f\left(
\begin{matrix}
\alpha & \beta + i\gamma \\
\beta - i\gamma & \delta
\end{matrix}
\right)I\right\}
\end{gathered}
\end{equation}
is valid where $|u|=1$ and $w \in \{-iu, iu\}$ (and obviously $u$ and $w$ are independent of the actual value of $\alpha,\beta,\gamma,\delta$). Now observe that
\[
N_+(\alpha,\beta,\gamma,\delta) = \left\{\begin{matrix}
\overline{\left(\begin{matrix}
0 & \overline{u} \\
1 & 0
\end{matrix}\right)
\left(
\begin{matrix}
\alpha & \beta + i\gamma \\
\beta - i\gamma & \delta
\end{matrix}
\right)
\left(\begin{matrix}
0 & \overline{u} \\
1 & 0
\end{matrix}\right)^*}  & \text{if } w = iu \\
 & \\
\left(\begin{matrix}
0 & u \\
1 & 0
\end{matrix}\right)
\left(
\begin{matrix}
\alpha & \beta + i\gamma \\
\beta - i\gamma & \delta
\end{matrix}
\right)
\left(\begin{matrix}
0 & u \\
1 & 0
\end{matrix}\right)^*  & \text{if } w = -iu
\end{matrix}\right.
\]
and
\[
N_-(\alpha,\beta,\gamma,\delta) = \left\{\begin{matrix}
-\overline{\left(\begin{matrix}
0 & -\overline{u} \\
1 & 0
\end{matrix}\right)
\left(
\begin{matrix}
\alpha & \beta + i\gamma \\
\beta - i\gamma & \delta
\end{matrix}
\right)
\left(\begin{matrix}
0 & -\overline{u} \\
1 & 0
\end{matrix}\right)^*} & \text{if } w = iu\\
 & \\
-\left(\begin{matrix}
0 & -u \\
1 & 0
\end{matrix}\right)
\left(
\begin{matrix}
\alpha & \beta + i\gamma \\
\beta - i\gamma & \delta
\end{matrix}
\right)
\left(\begin{matrix}
0 & -u \\
1 & 0
\end{matrix}\right)^* & \text{if } w = -iu
\end{matrix}\right.
\]
where $\overline{\cdot}$ denotes elementwise conjugation.

From \eqref{E:MT2DPaf} and the observations above we get that
\[
\phi_1(A) \in\{ \psi_1(A) := U_1AU_1^* + f(A)I, \psi_2(A) := - U_2AU_2^* + f(A)I \}
\]
holds for every $A\in\irB_s(\C^2)$ where $U_1$ and $U_2$ are both unitary, or antiunitary operators. Let $H_j = \ker(\psi_j-\phi_1)$ ($j=1,2$). Since linear maps on $\irB_s(\C^2)$ are continuous, both $H_1$ and $H_2$ are closed sets and obviously $\irB_s(\C^2) = H_1\cup H_2$. By Baire's category theorem, one of them contains an open ball of $\irB_s(\C^2)$, and by linearity this set coincides with the whole space $\irB_s(\C^2)$. Consequently, we have
\[
\phi_1(A) = U_1AU_1^* + f(A)I \quad (A\in\irB_s(\C^2))
\]
or
\[
\phi_1(A) = -U_2AU_2^* + f(A)I \quad (A\in\irB_s(\C^2)).
\]
Transforming back to our original mapping $\phi$, we easily complete the proof in the two-dimensional case.
\end{proof}


Before proving our result concerning the preserver problem on $\irP_1(\C^2)$, let us make some observations. Let $\mathcal{N}$ denote the set of those rank-two self-adjoint operators on $\irH$ whose spectrum contains $\{-1,1\}$. The spectrum of any operator $T$ will be denoted by $\sigma(T)$. For any $u,v\in\irH$ the symbol $u\otimes v$ will stand for the rank-one element of $\irB(\irH)$ defined by $(u\otimes v)y=\langle y,v\rangle u\ (y\in\irH)$. It is easy to see that there is a real number $c>0$ such that
\begin{equation}\label{E:c}
|||N|||=c\quad(N\in\mathcal{N}).
\end{equation}

Next, we show that if $A\in\irS(\irH)$ is a density operator and $x\in\irH$ is a unit vector, then we have
\begin{equation}\label{mcd}
\big|\big|\big|[A,x\otimes x]\big|\big|\big| = c\sqrt{\langle A^2x,x\rangle - \langle Ax,x\rangle^2},
\end{equation}
(observe that by the Cauchy-Schwarz inequality $\langle A^2x,x\rangle-\langle Ax,x\rangle^2\ge0$). In order to verify \eqref{mcd}, first we remark that when $x$ is an eigen-vector of $A$, then we have
\[
\big|\big|\big|[A,x\otimes x]\big|\big|\big| = \big|\big|\big|(Ax)\otimes x - x\otimes (Ax)\big|\big|\big| = 0 = c\sqrt{\langle A^2x,x\rangle - \langle Ax,x\rangle^2}.
\]
Second suppose that $x$ and $Ax$ are linearly independent and let $T = [A,x\otimes x]$. A straightforward calculation gives us the matrix of $T|_{\rng T}$ with respect to the basis $\{x,Ax\}$ (in this paper $\rng$ denotes the range of maps):
\[
\left(\begin{array}{cc}
-\langle Ax,x\rangle & -\langle A^2x,x\rangle \\
1 & \langle Ax,x\rangle
\end{array}\right).
\]
This implies that $\sigma(T)\setminus\{0\} = \{\pm i\sqrt{\langle A^2x,x\rangle-\langle Ax,x\rangle^2}\}$. By the spectral theorem and \eqref{E:c} we get \eqref{mcd}.

Throughout this section we will use the notation $P_u = u\otimes u \in\irP_1(\irH)$ for any unit vector $u\in\irH$. Moreover, if we write $P_u$, then it will always be implicitly assumed that $\|u\| = 1$. If we consider two elements $P_u, P_v\in\irP_1(\irH)$, then by applying \eqref{mcd} we get
\[
\big|\big|\big|[P_u,P_v]\big|\big|\big| = c\sqrt{\langle P_u v,v\rangle - \langle P_u v,v\rangle^2} = c\sqrt{|\langle u,v\rangle|^2 - |\langle u,v\rangle|^4} = c\sqrt{\tr P_uP_v-(\tr P_uP_v)^2}
\]
where the well-known equation $\tr P_uP_v = |\langle u,v\rangle|^2$ was used and will be used often in the proof of Theorem \ref{T:proj}. This shows that a mapping on $\irP_1(\C^2)$ satisfies \eqref{E:nocp} if and only if it leaves the quantity
\[
f(P,Q)=\tr PQ-(\tr PQ)^2
\]
invariant $(P,Q\in\irP_1(\C^2))$. We say that a mapping $\phi\colon \irP_1(\irH) \to \irP_1(\irH)$ has the property \eqref{E:*} if
\begin{equation}\tag{*}\label{E:*}
\tr \phi(P)\phi(Q)\in\{\tr PQ,1-\tr PQ\}\quad(P,Q\in\irP_1(\C^2)).
\end{equation}
By a straightforward calculation we see that $\phi$ has the property \eqref{E:*} if and only if it satisfies \eqref{E:nocp}.

A transformation $\phi\colon\irP_1(\C^2)\to\irP_1(\C^2)$ is called a locally polynomial map (or LPM, for short) if for every $P\in\irP_1(\C^2)$ we have $\phi(P)\in\{P,P^{\bot}\}$. An easy calculation shows that any LPM has the property \eqref{E:*}. Throughout this section $\diag(a_1,a_2) := \left(\begin{matrix}
a_1 & 0 \\
0 & a_2
\end{matrix}\right)$ $(a_1,a_2\in\mathbb{C})$. Now, we are in a position to prove Theorem \ref{T:proj}. We note that our proof includes three claims.

\begin{proof}[Proof of Theorem \ref{T:proj}]
Let us consider an injective map $\Phi\colon \irP_1(\C^2)\to \irP_1(\C^2)$ that satisfies \eqref{E:*} (the general case will be handled at the end of this proof). Let $P,Q\in\irP_1(\mathbb{C}^2)$. Observe that $f(P,Q)=0$ exactly when $[P,Q] = 0$. By the injectivity of $\Phi$, it follows that $\Phi$ preserves orthogonality in both directions, i.~e.~$\Phi(P)$ and $\Phi(Q)$ are orthogonal if and only if the same holds for $P$ and $Q$.

For any unitary matrix $U$, we see that the mapping
\[
\Phi_1(\cdot) = U\Phi(\cdot)U^*
\]
is obviously injective and satisfies \eqref{E:*}. We may choose such a $U$ for which
\begin{equation}\label{E:ass1}
\Phi_1\left(P_{(1,0)}\right) = P_{(1,0)}
\end{equation}
holds. Then $\Phi_1\left(P_{(0,1)}\right) = P_{(0,1)}$ follows immediately. Throughout the proof we will implicitly use the elementary fact that any unit vector in $\C^2$ is a scalar multiple of a vector $(\cos t,\lambda\sin t)$ with some numbers $t\in[0,\pi/2],\ \lambda\in\T$.

Let $t\in]0,\pi/2[$, $\lambda\in\T$ be arbitrary, and $\Phi_1\left(P_{(\cos t,\lambda\sin t)}\right) = P_{(w_1,w_2)}$ with some unit vector $(w_1,w_2)\in\C^2$. Since we have
\[
\tr P_{(1,0)}P_{(\cos t,\lambda\sin t)} = \cos^2 t,
\]
by \eqref{E:ass1} and \eqref{E:*} we infer
\[
|w_1|^2=\tr P_{(1,0)}P_{(w_1,w_2)}\in\{\sin^2 t,\ \cos^2 t\}.
\]
Therefore we conclude that
\[
\Phi_1\left(P_{(\cos t,\lambda\sin t)}\right) \in \left\{P_{(\cos t,\mu\sin t)},\ P_{(\sin t,\mu\cos t)}\colon \mu\in\T\right\}.
\]
We immediately get that $\Phi_1\left(P_{\left(1/\sqrt{2},1/\sqrt{2}\right)}\right) = P_{\left(1/\sqrt{2},\lambda_0/\sqrt{2}\right)}$ is valid with some $\lambda_0\in\T$. Let us consider the transformation
\[
\Phi_2(\cdot) = \diag(1,\lambda_0)^*\Phi_1(\cdot)\diag(1,\lambda_0)
\]
that is obviously injective, satisfies \eqref{E:*} and $\Phi_2(P_{(1,0)}) = P_{(1,0)}$, moreover, we have
\begin{equation}\label{E:ass2}
\Phi_2\left(P_{\left(\frac{1}{\sqrt{2}},\frac{1}{\sqrt{2}}\right)}\right) = P_{\left(\frac{1}{\sqrt{2}},\frac{1}{\sqrt{2}}\right)}.
\end{equation}
Then, since $\Phi_2$ preserves orthogonality we get that $\Phi_2\left(P_{\left(\frac{1}{\sqrt{2}},\frac{-1}{\sqrt{2}}\right)}\right) = P_{\left(\frac{1}{\sqrt{2}},\frac{-1}{\sqrt{2}}\right)}$. We proceed with the proof of the following claim.

\begin{claim}\label{C:1} We have either
\begin{equation}\label{lemma_case1}
\Phi_2\left(P_{\left(\frac{1}{\sqrt{2}},\frac{\sigma}{\sqrt{2}}\right)}\right) \in \left\{ P_{\left(\frac{1}{\sqrt{2}},-\frac{\sigma}{\sqrt{2}}\right)}, P_{\left(\frac{1}{\sqrt{2}},\frac{\sigma}{\sqrt{2}}\right)} \right\} \quad (\sigma\in\T\setminus\{1,-1\}),
\end{equation}
or
\begin{equation}\label{lemma_case2}
\Phi_2\left(P_{\left(\frac{1}{\sqrt{2}},\frac{\sigma}{\sqrt{2}}\right)}\right) \in \left\{ P_{\left(\frac{1}{\sqrt{2}},\frac{\overline{\sigma}}{\sqrt{2}}\right)}, P_{\left(\frac{1}{\sqrt{2}},-\frac{\overline{\sigma}}{\sqrt{2}}\right)}\right\} \quad (\sigma\in\T\setminus\{1,-1\}).
\end{equation}
\end{claim}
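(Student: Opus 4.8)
\emph{The plan} is to analyse $\Phi_2$ on the ``equatorial'' family $\big\{P_{(1/\sqrt{2},\,\sigma/\sqrt{2})}:\sigma\in\T\big\}$. First I would locate the images of these projections: writing $\Phi_2\big(P_{(1/\sqrt{2},\,\sigma/\sqrt{2})}\big)=P_{(w_1,w_2)}$, from $\tr P_{(1,0)}P_{(1/\sqrt{2},\,\sigma/\sqrt{2})}=\tfrac12$ together with $\Phi_2(P_{(1,0)})=P_{(1,0)}$ and \eqref{E:*} we get $|w_1|^2=\tfrac12$, hence $|w_2|^2=\tfrac12$, so $\Phi_2\big(P_{(1/\sqrt{2},\,\sigma/\sqrt{2})}\big)=P_{(1/\sqrt{2},\,\mu(\sigma)/\sqrt{2})}$ for a well-defined $\mu(\sigma)\in\T$. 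Next I would feed in that $\Phi_2$ fixes $P_{(1/\sqrt{2},\,1/\sqrt{2})}$ and $P_{(1/\sqrt{2},\,-1/\sqrt{2})}$ (by \eqref{E:ass2} and preservation of orthogonality): since $\tr P_{(1/\sqrt{2},\,\pm 1/\sqrt{2})}P_{(1/\sqrt{2},\,\sigma/\sqrt{2})}=\tfrac12(1\pm\re\sigma)$, property \eqref{E:*} forces $\re\mu(\sigma)=\pm\re\sigma$, and combined with $|\mu(\sigma)|=|\sigma|=1$ this yields
\[
\mu(\sigma)\in\{\sigma,\overline{\sigma},-\sigma,-\overline{\sigma}\}\qquad(\sigma\in\T).
\]

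\emph{The dichotomy.} For $\sigma\in\T\setminus\{1,-1,i,-i\}$ the four values $\sigma,\overline{\sigma},-\sigma,-\overline{\sigma}$ are pairwise distinct and $\{\sigma,-\sigma\}\cap\{\overline{\sigma},-\overline{\sigma}\}=\emptyset$; call $\sigma$ \emph{untwisted} if $\mu(\sigma)\in\{\sigma,-\sigma\}$ and \emph{twisted} if $\mu(\sigma)\in\{\overline{\sigma},-\overline{\sigma}\}$, exactly one of which then holds. (At $\sigma=\pm i$ the two alternatives in the claim coincide, and $\sigma=\pm 1$ is already fixed, so these values need no attention.) Thus the claim is equivalent to: one cannot have simultaneously an untwisted $\sigma_1\neq\pm i$ and a twisted $\sigma_2\neq\pm i$. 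Suppose we could; write $\mu(\sigma_1)=\varepsilon_1\sigma_1$ and $\mu(\sigma_2)=\varepsilon_2\overline{\sigma_2}$ with $\varepsilon_1,\varepsilon_2\in\{1,-1\}$. Applying \eqref{E:*} to the pair $P_{(1/\sqrt{2},\,\sigma_1/\sqrt{2})},P_{(1/\sqrt{2},\,\sigma_2/\sqrt{2})}$ and using
\[
\tr P_{(1/\sqrt{2},\,\varepsilon_1\sigma_1/\sqrt{2})}P_{(1/\sqrt{2},\,\varepsilon_2\overline{\sigma_2}/\sqrt{2})}=\tfrac12\big(1+\varepsilon_1\varepsilon_2\re(\sigma_1\sigma_2)\big),\qquad \tr P_{(1/\sqrt{2},\,\sigma_1/\sqrt{2})}P_{(1/\sqrt{2},\,\sigma_2/\sqrt{2})}=\tfrac12\big(1+\re(\sigma_1\overline{\sigma_2})\big),
\]
I would conclude $\re(\sigma_1\sigma_2)=\pm\re(\sigma_1\overline{\sigma_2})$.

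\emph{Conclusion, and the crux.} Putting $\sigma_j=e^{i\theta_j}$, the last relation reads $\cos(\theta_1+\theta_2)=\pm\cos(\theta_1-\theta_2)$; expanding, the sign ``$+$'' yields $\sin\theta_1\sin\theta_2=0$, i.e.\ $\sigma_1\in\{1,-1\}$ or $\sigma_2\in\{1,-1\}$, while the sign ``$-$'' yields $\cos\theta_1\cos\theta_2=0$, i.e.\ $\sigma_1\in\{i,-i\}$ or $\sigma_2\in\{i,-i\}$ --- in either case contradicting the choice of $\sigma_1,\sigma_2$. Hence every $\sigma\in\T\setminus\{1,-1\}$ is untwisted, which is \eqref{lemma_case1}, or every such $\sigma$ is twisted, which is \eqref{lemma_case2}. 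The three trace identities are routine; I expect the one substantive step to be the observation that coupling an untwisted equatorial projection with a twisted one through \eqref{E:*} collapses, via the addition formula for the cosine, to a constraint satisfiable only at the exceptional arguments $\pm 1,\pm i$ --- the only bookkeeping nuisance being to quarantine those four points at the outset.
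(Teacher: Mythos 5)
Your proposal is correct and follows essentially the same route as the paper: first pin down $\mu(\sigma)\in\{\sigma,-\sigma,\overline{\sigma},-\overline{\sigma}\}$ using the fixed projections, then rule out the coexistence of an ``untwisted'' and a ``twisted'' $\sigma$ outside $\{1,-1,i,-i\}$ by applying \eqref{E:*} to such a pair. The only cosmetic difference is that you phrase the final contradiction via the cosine addition formula where the paper uses the parallelogram law on $|1+\sigma_1\overline{\sigma_2}|^2$; these are the same computation.
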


\begin{proof}
Set an arbitrary $\sigma\in\T\setminus\{1,-1\}$ and let $\Phi_2\left(P_{\left(\frac{1}{\sqrt{2}},\frac{\sigma}{\sqrt{2}}\right)}\right) = P_{\left(\frac{1}{\sqrt{2}},\frac{\tilde\sigma}{\sqrt{2}}\right)}$ with some $\tilde\sigma\in\T$. We have
\[
\tr P_{\left(\frac{1}{\sqrt{2}},\frac{1}{\sqrt{2}}\right)}P_{\left(\frac{1}{\sqrt{2}},\frac{\sigma}{\sqrt{2}}\right)} = \frac{|1+\overline{\sigma}|^2}{4}
\]
and
\[
\tr P_{\left(\frac{1}{\sqrt{2}},\frac{1}{\sqrt{2}}\right)}P_{\left(\frac{1}{\sqrt{2}},\frac{\tilde\sigma}{\sqrt{2}}\right)} = \frac{|1+\overline{\tilde\sigma}|^2}{4}.
\]
By \eqref{E:ass2}, \eqref{E:*} and the parallelogram law we obtain
\[
|1+\overline{\tilde\sigma}|^2 \in \big\{|1+\overline{\sigma}|^2, 4-|1+\overline{\sigma}|^2\big\} = \big\{|1+\overline{\sigma}|^2, |1-\overline{\sigma}|^2\big\}
\]
Since for a given number $\lambda\in\T$ we have $|1+\sigma| = |1+\lambda|$ if and only if $\lambda \in \{\sigma, \overline{\sigma}\}$, we infer
\[
\tilde\sigma\in\{ \sigma, -\sigma, \overline{\sigma},-\overline{\sigma}\} \quad (\sigma\in\T\setminus\{1,-1\}).
\]
In particular we get
\[
\Phi_2\left(P_{\left(\frac{1}{\sqrt{2}},\frac{i}{\sqrt{2}}\right)}\right) \in \left\{P_{\left(\frac{1}{\sqrt{2}},\frac{i}{\sqrt{2}}\right)},P_{\left(\frac{1}{\sqrt{2}},-\frac{i}{\sqrt{2}}\right)}\right\}.
\]

Finally define $\tilde{\T} = \T\setminus\{1,-1,i,-i\}$ and choose arbitrary numbers $\sigma_1,\sigma_2\in\tilde{\T}$. We have
\begin{equation}\label{E:C1vege1}
\tr P_{\left(\frac{1}{\sqrt{2}},\frac{\sigma_1}{\sqrt{2}}\right)}P_{\left(\frac{1}{\sqrt{2}},\frac{\sigma_2}{\sqrt{2}}\right)} = \frac{|1+\sigma_1\overline{\sigma_2}|^2}{4}
\end{equation}
and
\begin{equation}\label{E:C1vege2}
\tr P_{\left(\frac{1}{\sqrt{2}},\frac{\tilde{\sigma}_1}{\sqrt{2}}\right)}P_{\left(\frac{1}{\sqrt{2}},\frac{\tilde{\sigma}_2}{\sqrt{2}}\right)} = \frac{\big|1+\tilde{\sigma}_1\overline{\tilde{\sigma}_2}\big|^2}{4}.
\end{equation}
Suppose for a moment that $\tilde{\sigma_1}=\pm\sigma_1$ and $\tilde{\sigma_2}=\pm\overline{\sigma_2}$. Then, using the parallelogram law again, \eqref{E:C1vege1} and \eqref{E:C1vege2} yield that $\sigma_1\in\{-\overline{\sigma_1},\overline{\sigma_1}\}$ or $\sigma_2\in\{-\overline{\sigma_2},\overline{\sigma_2}\}$, which contradicts the condition $\sigma_1,\sigma_2\in\tilde{\T}$. Whence we conclude that $\tilde\sigma\in\{\sigma,-\sigma\}$ holds for all $\sigma\in\tilde{\T}$ or $\tilde\sigma\in\{\overline{\sigma},-\overline{\sigma}\}$ is satisfied by every $\sigma\in\tilde{\T}$.
\end{proof}

Now we define a new mapping as follows:
\[
\Phi_3(\cdot) = \left\{
\begin{matrix}
\Phi_2(\cdot) & \text{if } \eqref{lemma_case1} \text{is valid,}\\
K\Phi_2(\cdot)K & \text{if } \eqref{lemma_case2} \text{is valid,}\\
\end{matrix}
\right.
\]
where $K$ denotes the coordinatewise conjugation operator which is antiunitary. Trivially $\Phi_3$ is injective, it has the property \eqref{E:*}, it satisfies $\Phi_3(P_{(1,0)}) = P_{(1,0)}$ and
\begin{equation}\label{E:Phi3}
\Phi_3\left(P_{\left(\frac{1}{\sqrt{2}},\frac{\sigma}{\sqrt{2}}\right)}\right) \in \left\{ P_{\left(\frac{1}{\sqrt{2}},-\frac{\sigma}{\sqrt{2}}\right)}, P_{\left(\frac{1}{\sqrt{2}},\frac{\sigma}{\sqrt{2}}\right)} \right\} \quad (\sigma\in\T\setminus\{1,-1\}).
\end{equation}

Next we establish the following important information about the general form of $\Phi_3$.

\begin{claim}\label{C:2}
We have
\[
\Phi_3(P_{(\cos t, \nu\sin t)}) \in \left\{P_{(\cos t, \nu\sin t)}, P_{(\sin t, -\nu\cos t)}, P_{(\sin t, \nu\cos t)}, P_{(\cos t, -\nu\sin t)}\right\} \quad (\nu\in\T, t\in]0,\pi/2[).
\]
\end{claim}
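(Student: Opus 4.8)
The plan is to fix an arbitrary unit vector of the form $(\cos t,\nu\sin t)$ with $t\in\,]0,\pi/2[$, $\nu\in\T$, write $\Phi_3(P_{(\cos t,\nu\sin t)})=P_{(w_1,w_2)}$, and pin down $(w_1,w_2)$ up to phases by exploiting the two reference points whose images we already know, namely $P_{(1,0)}$ (fixed) and $P_{(1/\sqrt2,\sigma/\sqrt2)}$ for $\sigma\in\T\setminus\{1,-1\}$ (forced by \eqref{E:Phi3} into the two-element set $\{P_{(1/\sqrt2,-\sigma/\sqrt2)},P_{(1/\sqrt2,\sigma/\sqrt2)}\}$). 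The first constraint, coming from $\tr P_{(1,0)}P_{(\cos t,\nu\sin t)}=\cos^2 t$ together with property \eqref{E:*}, gives $|w_1|^2\in\{\cos^2 t,\sin^2 t\}$, so after adjusting phases $(w_1,w_2)$ is a scalar multiple of either $(\cos t,\mu\sin t)$ or $(\sin t,\mu\cos t)$ for some $\mu\in\T$; it remains to show $\mu\in\{\nu,-\nu\}$ in each case.

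To get that, I would feed in the second reference family. For a cleverly chosen $\sigma$ (I expect $\sigma=i$ and also a generic $\sigma$ will both be needed), compute $\tr P_{(1/\sqrt2,\sigma/\sqrt2)}P_{(\cos t,\nu\sin t)}=\frac{1}{4}|\cos t+\bar\sigma\,\nu\sin t|^2\cdot 2$ — more precisely, $\tfrac14|{\cos t+\overline\sigma\nu\sin t}|^2\cdot\|\cdot\|^{-2}$, which after normalization equals $\tfrac12\bigl(1+2\cos t\sin t\,\re(\overline\sigma\nu)\bigr)$ — and set it equal (via \eqref{E:*}) to the analogous quantity $\tfrac12\bigl(1\pm 2\cos t\sin t\,\re(\overline{\tilde\sigma}\mu)\bigr)$ computed from $P_{(1/\sqrt2,\tilde\sigma/\sqrt2)}$ and $P_{(w_1,w_2)}$, where $\tilde\sigma\in\{\sigma,-\sigma\}$. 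Running this for enough values of $\sigma$ forces $\re(\overline\sigma\mu)=\pm\re(\overline\sigma\nu)$ simultaneously, and since two unit complex numbers whose real parts against \emph{all} (or enough) rotations agree up to sign must be equal up to sign, one concludes $\mu=\pm\nu$. This is a finite trigonometric/elementary-complex-number computation, essentially the same parallelogram-law manipulations used in Claim~\ref{C:1}, so I would carry it out but not belabor it.

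The main obstacle I anticipate is bookkeeping of the sign/conjugation ambiguities: each of the two reference families contributes a $\pm$ ambiguity and each evaluation of \eqref{E:*} contributes the $\{\tr,1-\tr\}$ dichotomy, so one must argue carefully that the ``wrong'' combinations either reduce to one of the four listed forms or are excluded. In particular, distinguishing the $(\cos t,\pm\nu\sin t)$ branch from the $(\sin t,\pm\nu\cos t)$ branch requires being careful when $t=\pi/4$ (where the two branches coincide and there is nothing to prove) and, away from $t=\pi/4$, noting that the $|w_1|^2$ dichotomy already separates them; then within each branch the sign of $\mu$ relative to $\nu$ is the only thing left, handled as above. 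A secondary point is that one should double-check the degenerate values $\sigma=\pm i$ do not cause the system of equations to degenerate — this is why a second, generic $\sigma$ is brought in — exactly mirroring the role of $\tilde{\T}$ in the proof of Claim~\ref{C:1}.
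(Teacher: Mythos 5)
Your proposal is correct and follows essentially the same route as the paper: the constraint against $P_{(1,0)}$ reduces the image to the two branches $P_{(\cos t,\mu\sin t)}$ and $P_{(\sin t,\mu\cos t)}$, and then testing against the family $P_{(1/\sqrt2,\sigma/\sqrt2)}$ via \eqref{E:*}, \eqref{E:Phi3} and the parallelogram law pins down $\mu\in\{\nu,-\nu\}$. The paper's only shortcut, which you could adopt to avoid the multi-$\sigma$ rigidity argument, is to test against the single value $\sigma=\nu$: this immediately gives $\re(\nu\overline{\mu})=\pm1$ and hence $\mu=\pm\nu$ in one step.
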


\begin{proof}
Set $\nu\in\T$ and $t\in\left]0,\frac{\pi}{2}\right[$. We have learnt that
\[
\Phi_3\left(P_{(\cos t,\nu\sin t)}\right) \in \{P_{(\cos t,\lambda_{\nu,t}\sin t)}, P_{(\sin t,\lambda_{\nu,t}\cos t)}\}
\]
holds with an appropriate $\lambda_{\nu,t}\in\T$. We have the following equalities:
\[
\tr P_{(\frac{1}{\sqrt{2}},\frac{\nu}{\sqrt{2}})}P_{(\cos t,\nu\sin t)} = \frac{\big|\cos t + \sin t\big|^2}{2},
\]
\[
\tr P_{(\frac{1}{\sqrt{2}},\frac{\pm\nu}{\sqrt{2}})}P_{(\cos t,\lambda_{\nu,t}\sin t)} = \frac{\big|\cos t \pm \nu\overline{\lambda_{\nu,t}} \sin t\big|^2}{2},
\]
\[
\tr P_{(\frac{1}{\sqrt{2}},\frac{\pm\nu}{\sqrt{2}})}P_{(\sin t,\lambda_{\nu,t}\cos t)} = \frac{\big|\sin t \pm \nu\overline{\lambda_{\nu,t}} \cos t\big|^2}{2}=\frac{\big|\cos t \pm \overline{\nu}\lambda_{\nu,t} \sin t\big|^2}{2}.
\]
Hence we conclude by \eqref{E:*}, \eqref{E:Phi3} and the parallelogram law that $\nu\overline{\lambda_{\nu,t}}\in\{1,-1\}$ holds, and consequently: $\lambda_{\nu,t}\in\{\nu,-\nu\}$.
\end{proof}

We proceed with the proof of the forthcoming claim.

\begin{claim}\label{C:3}
Our original transformation $\Phi(\cdot)$ can be written in the form $UL(\cdot)U^*$ with a unitary or an antiunitary operator $U$ on $\C^2$ and an injective LPM $L \colon\irP_1(\C^2)\to\irP_1(\C^2)$.
\end{claim}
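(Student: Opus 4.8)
The plan is to reduce Claim \ref{C:3} to the statement that $\Phi_3$ is, up to one further unitary-or-antiunitary conjugation, an injective LPM, and then unwind the conjugations already performed. Recall from the construction that $\Phi_3(\cdot)=V\Phi(\cdot)V^*$, where $V=\diag(1,\lambda_0)^*U$ in case \eqref{lemma_case1} and $V=K\diag(1,\lambda_0)^*U$ in case \eqref{lemma_case2}; in particular $V$ is unitary or antiunitary and $\Phi(\cdot)=V^*\Phi_3(\cdot)V$. Hence it is enough to produce a unitary or antiunitary $W$ on $\C^2$ and an injective LPM $L$ with $\Phi_3(\cdot)=WL(\cdot)W^*$, because then $\Phi(\cdot)=(V^*W)L(\cdot)(V^*W)^*$ with $V^*W$ unitary or antiunitary, i.e.\ $\Phi$ has the claimed form. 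Set $\Delta=\diag(1,-1)$. Since $P_{(\cos t,\nu\sin t)}^{\bot}=P_{(\sin t,-\nu\cos t)}$ and $\Delta P_{(\cos t,\nu\sin t)}\Delta=P_{(\cos t,-\nu\sin t)}$ (hence also $\Delta P_{(\cos t,\nu\sin t)}^{\bot}\Delta=P_{(\sin t,\nu\cos t)}$), Claim \ref{C:2} asserts precisely that $\Phi_3(P)\in\{P,P^{\bot},\Delta P\Delta,\Delta P^{\bot}\Delta\}$ for every non-polar $P$; at the two poles $\Delta P\Delta=P$ and $\Phi_3(P)=P$, while on the equator $\Delta P\Delta=P^{\bot}$, so in all these cases the four-element set is just $\{P,P^{\bot}\}$. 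Call $P$ of \emph{type (a)} if $\Phi_3(P)\in\{P,P^{\bot}\}$ and of \emph{type (b)} if $\Phi_3(P)\in\{\Delta P\Delta,\Delta P^{\bot}\Delta\}$; thus every $P\in\irP_1(\C^2)$ has at least one type, and each pole and each equatorial projection has both.

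The core of the argument is to show that one of the two types prevails on all non-polar, non-equatorial projections. Assume towards a contradiction that $P=P_{(\cos s,\mu\sin s)}$ is of type (a) and $Q=P_{(\cos t,\nu\sin t)}$ is of type (b), with $s,t\in\,]0,\pi/2[\,\setminus\{\pi/4\}$. Applying \eqref{E:*} to the pair $(P,Q)$ and using that for rank-one projections $\tr XY^{\bot}=\tr X^{\bot}Y=1-\tr XY$ and $\tr X^{\bot}Y^{\bot}=\tr XY$, one gets $\tr(P\,\Delta Q\Delta)\in\{\tr PQ,1-\tr PQ\}$. A short computation with $a=\cos s\cos t$ and $b=\mu\overline\nu\sin s\sin t$ gives
\[
\tr PQ-\tr(P\,\Delta Q\Delta)=4a\,\re b=\re(\mu\overline\nu)\sin 2s\,\sin 2t,\qquad \tr(P\,\Delta Q\Delta)-\bigl(1-\tr PQ\bigr)=\cos 2s\,\cos 2t,
\]
and since $\cos 2s\,\cos 2t\neq 0$ this forces $\re(\mu\overline\nu)=0$, i.e.\ $\mu\overline\nu\in\{i,-i\}$. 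Now choose a third non-polar, non-equatorial projection $R=P_{(\cos r,\rho\sin r)}$ with $\rho\overline\mu\notin\{i,-i\}$ and $\rho\overline\nu\notin\{i,-i\}$ (this excludes only the four values $\rho\in\{\pm i\mu,\pm i\nu\}$). If $R$ is of type (a), the same argument applied to $(R,Q)$ gives $\rho\overline\nu\in\{i,-i\}$; if $R$ is of type (b), applied to $(P,R)$ it gives $\mu\overline\rho\in\{i,-i\}$, i.e.\ $\rho\in\{\pm i\mu\}$. Either way the choice of $\rho$ is contradicted, so non-polar, non-equatorial projections of different types cannot coexist.

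Consequently, either every non-polar, non-equatorial projection is of type (a), or every one is of type (b). In the first case, together with the polar and equatorial data we obtain $\Phi_3(P)\in\{P,P^{\bot}\}$ for all $P$, so $L:=\Phi_3$ is an LPM and we take $W=I$. In the second case we likewise obtain $\Phi_3(P)\in\{\Delta P\Delta,\Delta P^{\bot}\Delta\}$ for all $P$; then $L(\cdot):=\Delta\Phi_3(\cdot)\Delta$ satisfies $L(P)\in\{P,P^{\bot}\}$, so $L$ is an LPM, $\Phi_3(\cdot)=\Delta L(\cdot)\Delta$, and we take $W=\Delta$. In both cases $L$ is injective, since $\Phi_3$ is the conjugate of the injective map $\Phi$ by the bijection $X\mapsto VXV^*$ of $\irP_1(\C^2)$. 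Substituting into $\Phi(\cdot)=(V^*W)L(\cdot)(V^*W)^*$ completes the proof of Claim \ref{C:3}. I expect the dichotomy step to be the main obstacle: extracting the precise non-degeneracy condition hidden in \eqref{E:*} and then using it to propagate a single type across all of $\irP_1(\C^2)$; the remainder is bookkeeping with unitaries and antiunitaries.
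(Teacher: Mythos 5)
Your proof is correct, and although it follows the same overall skeleton as the paper's argument --- establish a dichotomy among the four options permitted by Claim \ref{C:2}, then realize $\Phi_3$ as a conjugated LPM and unwind the earlier normalizations --- the way you execute the dichotomy is genuinely different and arguably tighter. The paper first composes $\Phi_3$ with an auxiliary injective LPM to cut the four options down to two, namely $\Phi_4(P_{(\cos t,\nu\sin t)})\in\{P_{(\cos t,\nu\sin t)},P_{(\sin t,\nu\cos t)}\}$, and then derives a contradiction from two specially chosen configurations ((I) $\mu=\nu$, and (II) $s=t$ with $|1-\overline{\nu}\mu|<\sqrt2$), leaning on the only sketched assertion that any counterexample can be normalized into one of these two shapes. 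You instead group the four options into the LPM-invariant pairs $\{P,P^{\bot}\}$ and $\{\Delta P\Delta,\Delta P^{\bot}\Delta\}$ with $\Delta=\diag(1,-1)$, which makes the auxiliary composition unnecessary, and your two identities $\tr PQ-\tr(P\Delta Q\Delta)=\re(\mu\overline{\nu})\sin 2s\sin 2t$ and $\tr(P\Delta Q\Delta)-(1-\tr PQ)=\cos 2s\cos 2t$ (both of which I checked) extract the exact obstruction $\re(\mu\overline{\nu})=0$ from \eqref{E:*} in a single uniform computation; the third-projection argument then makes the propagation of one type across all of $\irP_1(\C^2)$ completely explicit --- it is essentially the content hidden in the paper's ``we can choose these values'' step. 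Two further small differences are worth noting: in the second case your conjugating operator is the unitary $\Delta$ rather than the antiunitary map (entrywise conjugation composed with conjugation by the symmetric permutation matrix) appearing in the paper --- the two differ by the orthocomplement map, which is itself an LPM, so both decompositions are legitimate --- and your observation that the two type-sets coincide at the poles and on the equator disposes of the exceptional projections $P_{(1,0)},P_{(0,1)}$ and $P_{(1/\sqrt2,\nu/\sqrt2)}$ more cleanly than the paper's separate treatment.
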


\begin{proof}
Clearly, if we prove the statement for $\Phi_3$, then by transforming back to $\Phi$ we conclude that $\Phi$ has the above form. Therefore we will investigate $\Phi_3$.

The composition of an injective LPM $L$ and $\Phi_3$ is clearly injective and has the property \eqref{E:*}. By the previous claim, we can choose such an $L$ for which the mapping
\[
\Phi_4 = L\circ \Phi_3
\]
also satisfies
\begin{equation}\label{we_can_sup_1:eq}
\Phi_4\left(P_{(\cos t, \nu\sin t)}\right)\in\left\{ P_{(\cos t, \nu\sin t)},P_{(\sin t, \nu\cos t)}\right\} \quad \left(\nu\in\T, t\in\left]0,\frac{\pi}{2}\right[\right).
\end{equation}
and $\Phi_4(P_{(1,0)}) = P_{(1,0)}$. We are going to show that
\begin{equation}\label{E:formF1}
\Phi_4\left(P_{(\cos t, \nu\sin t)}\right) = P_{(\cos t, \nu\sin t)}\quad\left(\nu\in\T,\ t\in\left]0,\tfrac{\pi}{2}\right[\right)
\end{equation}
or
\begin{equation}\label{E:formF2}
\Phi_4\left(P_{(\cos t, \nu\sin t)}\right)=P_{(\sin t, \nu\cos t)}\quad\left(\nu\in\T,\ t\in\left]0,\tfrac{\pi}{2}\right[\right)
\end{equation}
holds. Suppose the contrary, i.~e.~there are numbers $\mu,\nu\in\T$ and $t,s\in\left]0,\frac{\pi}{2}\right[\setminus\{\frac{\pi}{4}\}$ such that
\[
\Phi_4(P_{(\cos t, \nu\sin t)}) = P_{(\cos t, \nu\sin t)}
\]
and
\[
\Phi_4(P_{(\cos s, \mu\sin s)}) = P_{(\sin s, \mu\cos s)}.
\]
It follows easily that we can choose these values $\mu,\nu,s,t$ in such a way that at least one of the following possibilities holds:
\begin{itemize}
\item[(I)] $\mu=\nu$, or
\item[(II)] $s=t$ and $|1-\overline{\nu}\mu|<\sqrt{2}$. 
\end{itemize}
Let us make such a choice and consider these possibilities separately.

If we have (I), then the equalities
\[
\Phi_4(P_{(\cos t, \nu\sin t)}) = P_{(\cos t, \nu\sin t)}
\]
and
\[
\Phi_4(P_{(\cos s, \nu\sin s)}) = P_{(\sin s, \nu\cos s)}
\]
hold. Since
\[
\tr P_{(\cos t, \nu\sin t)}P_{(\cos s, \nu\sin s)} = |\cos t \cos s + \sin t \sin s|^2 = \cos^2(t-s)
\]
and
\[
\tr P_{(\cos t, \nu\sin t)}P_{(\sin s, \nu\cos s)} = |\cos t\sin s + \sin t\cos s|^2 = \sin^2(t+s),
\]
we get that either $\cos^2(t-s) = \sin^2(t+s)$ or $\cos^2(t-s) + \sin^2(t+s) = 1$ has to be satisfied by \eqref{E:*}. On the contrary, none of them can be true whenever $t,s\in\left]0,\frac{\pi}{2}\right[\setminus\{\frac{\pi}{4}\}$, hence (I) is impossible.

If (II) happens, then we have
\[
\Phi_4(P_{(\cos t, \mu\sin t)}) = P_{(\cos t, \mu\sin t)},
\]
\[
\Phi_4(P_{(\cos t, \nu\sin t)}) = P_{(\sin t, \nu\cos t)},
\]
and $|1-\mu\overline{\nu}| < \sqrt{2}$. By a straightforward calculation we get
\[
\tr P_{(\cos t, \mu\sin t)}P_{(\cos t, \nu\sin t)} = |\cos^2t + \mu\overline{\nu}\sin^2t|^2,
\]
\[
\tr P_{(\cos t, \mu\sin t)}P_{(\sin t, \nu\cos t)} = \cos^2 t\sin^2 t|1 + \mu\overline{\nu}|^2.
\]
Then we conclude that
\begin{equation}\label{E:trig1}
|\cos^2t + \mu\overline{\nu}\sin^2t|^2 = \cos^2 t\sin^2 t|1 + \mu\overline{\nu}|^2
\end{equation}
or
\begin{equation}\label{E:trig2}
|\cos^2t + \mu\overline{\nu}\sin^2t|^2 + \cos^2 t\sin^2 t|1 + \mu\overline{\nu}|^2 = 1
\end{equation}
holds. On the one hand, the division of equation \eqref{E:trig1} by $\cos^2t\sin^2t$ and a few further calculation gives us
\[
\ctg^2 t + \tg^2 t = 2,
\]
which cannot hold whenever $t\in\left]0,\frac{\pi}{2}\right[\setminus\{\frac{\pi}{4}\}$ is valid. On the other hand, equation \eqref{E:trig2} is equivalent to the following:
\[
\cos^4 t + \sin^4 t + 2\cos^2 t\sin^2 t\re\mu\overline{\nu}+\cos^2 t\sin^2 t (2 + 2\re\mu\overline{\nu}) = 1.
\]
This can be written as
\[
\cos^2 t\sin^2 t\re\mu\overline{\nu} = 0,
\]
which contradicts the inequality $|1-\mu\overline{\nu}|<\sqrt{2}$ and the relation $t\in\left]0,\frac{\pi}{2}\right[\setminus\{\frac{\pi}{4}\}$.

From the above observations we conclude that indeed, \eqref{E:formF1} or \eqref{E:formF2} holds. For any $P\in\irP_1(\C^2)$ we denote the matrix obtained from $P$ by interchanging its diagonal elements by $P^+$. A straightforward computation shows that we have
\[
\Phi_4(P) = P \quad(P\in\irP_1(\C^2)\setminus\{P_{(1,0)},P_{(0,1)}\})
\]
or
\[
\Phi_4(P) = \overline{\left(\begin{matrix}
0 & 1 \\
1 & 0 \\
\end{matrix}\right) P \left(\begin{matrix}
0 & 1 \\
1 & 0 \\
\end{matrix}\right)^*} \quad(P\in\irP_1(\C^2)\setminus\{P_{(1,0)},P_{(0,1)}\}).
\]
Now, we obtain easily that
\[
\Phi_4(P)=UPU^*\quad(P\in\irP_1(\C^2)\setminus\{P_{(1,0)},P_{(0,1)}\})
\]
is valid where $U$ is a unitary or an antiunitary operator on $\C^2$ and $\Phi_4(P_{(1,0)}) = P_{(1,0)}$. Transforming back to $\Phi_3$, we easily complete the proof.
\end{proof}

The rest of the argument concerns the non-injective case, so from now on $\phi\colon\irP_1(\C^2)\to\irP_1(\C^2)$ will be an arbitrary map which satisfies \eqref{E:*}. We are going to define an injective transformation $\Phi\colon\irP_1(\C^2)\to\irP_1(\C^2)$, by composing $\phi$ and an LPM, which also has the property \eqref{E:*}. In order to do this, we need the following observation. Let $P\in\irP_1(\C^2)$ be a projection. Since $\phi$ preserves commutativity in both directions, if $\phi(P) = \phi\left(P^{\bot}\right)$ for some $P\in\irP_1(\C^2)$, then $\phi(P)^{\bot}\notin\rng\phi$. Now, we construct the mentioned map $\Phi$ as follows. If $\phi(P) \neq \phi\left(P^{\bot}\right)$, then $\Phi(P) := \phi(P)$; otherwise let us choose the values of $\Phi$ at $P$ and $P^{\bot}$ such that $\left\{\Phi(P),\Phi(P^{\bot})\right\} = \left\{\phi(P),\phi(P)^{\bot}\right\}$ holds. Trivially, there exists an LPM $L_1$ for which $\Phi =L_1\circ \phi$ holds. It follows that $\phi = L_2\circ \Phi$ for some LPM $L_2$. By the construction of $\Phi$, it is injective and satisfies \eqref{E:*}. Hence Claim \ref{C:3} applies to $\Phi$ and we obtain that
\[
\phi(P) = L_2(UL_3(P)U^*)\in\{UPU^*,UP^{\bot}U^*\}\quad(P\in\irP_1(\C^2)),
\]
where $L_3$ is an LPM and $U$ is a unitary or an antiunitary operator on $\C^2$. This shows that $\phi$ is of the desired form and the proof is complete.
\end{proof}

The next corollary is an obvious reformulation of Theorem \ref{T:proj}. We omit its verification because we only have to use the fact that there is a natural correspondence between rank-one projections and the one-dimensional subspaces of $\irH$, and some observations made just before the proof of Theorem \ref{T:proj}.

\begin{cor}\label{C:proj}
Let $\mathbb{S}$ denote the set of all unit vectors in $\C^2$, and suppose that $\phi\colon\mathbb{S}\to\mathbb{S}$ is such a transformation which satisfies
\[
|\langle \phi(u),\phi(v)\rangle|\in\{|\langle u,v\rangle|, \sqrt{1-|\langle u,v\rangle|^2}\} \quad (u,v\in\mathbb{S}).
\]
Then there exist a unitary or an antiunitary operator $U$ on $\irH$ and a function $f\colon\mathbb{S}\to\T$ such that for each $u\in \mathbb{S}$ we have
\[
\phi(u) \in \{f(u)Uu, f(u)Uu^\perp\}
\]
where $u^\perp$ is an arbitrary unit vector which is orthogonal to $u$.
\end{cor}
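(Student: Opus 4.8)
The plan is to transfer the statement to the language of rank-one projections and then apply Theorem \ref{T:proj}. Recall the bijective correspondence $u\mapsto P_u=u\otimes u$ between the rays of $\C^2$ and the elements of $\irP_1(\C^2)$, and the identity $\tr P_uP_v=|\langle u,v\rangle|^2$ noted just before the proof of Theorem \ref{T:proj}. Under this identity the hypothesis on $\phi$ becomes
\[
\tr P_{\phi(u)}P_{\phi(v)}\in\{\tr P_uP_v,\ 1-\tr P_uP_v\}\qquad(u,v\in\mathbb{S}).
\]
The one genuine obstacle to invoking Theorem \ref{T:proj} directly is that $\phi$ need not be constant on the circle orbits $\{\lambda u:\lambda\in\T\}$, so the assignment $P_u\mapsto P_{\phi(u)}$ might fail to be well defined on $\irP_1(\C^2)$.

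To circumvent this, fix a section $s\colon\irP_1(\C^2)\to\mathbb{S}$, i.e.\ a map with $P_{s(P)}=P$ for every $P$, and define $\psi\colon\irP_1(\C^2)\to\irP_1(\C^2)$ by $\psi(P):=P_{\phi(s(P))}$. Then
\[
\tr\psi(P)\psi(Q)=\big|\langle\phi(s(P)),\phi(s(Q))\rangle\big|^2\in\{\tr PQ,\ 1-\tr PQ\},
\]
so $\psi$ has the property \eqref{E:*}. By Theorem \ref{T:proj} there is a unitary or an antiunitary operator $U$ on $\C^2$ such that $\psi(P)\in\{UPU^*,UP^{\perp}U^*\}$ for every $P$. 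Since $U(w\otimes w)U^*=(Uw)\otimes(Uw)=P_{Uw}$ for each unit vector $w$ (this holds in both the unitary and the antiunitary case) and $UP^{\perp}U^*=(UPU^*)^{\perp}$, we conclude that $\psi(P_u)\in\{P_{Uu},P_{Uu^{\perp}}\}$ for every $u\in\mathbb{S}$, where $u^{\perp}$ denotes a fixed unit vector orthogonal to $u$.

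It remains to relate $\phi(u)$ to $\psi(P_u)=P_{\phi(s(P_u))}$. Writing $s(P_u)=\lambda u$ with $\lambda\in\T$, the hypothesis applied to the pair $u,s(P_u)$ gives
\[
\big|\langle\phi(u),\phi(s(P_u))\rangle\big|\in\big\{|\langle u,\lambda u\rangle|,\ \sqrt{1-|\langle u,\lambda u\rangle|^2}\big\}=\{1,0\}.
\]
In the first case Cauchy--Schwarz forces $\phi(u)$ to be a unimodular scalar multiple of $\phi(s(P_u))$, so $P_{\phi(u)}=\psi(P_u)\in\{P_{Uu},P_{Uu^{\perp}}\}$. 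In the second case $\phi(u)\perp\phi(s(P_u))$, hence $P_{\phi(u)}=\psi(P_u)^{\perp}\in\{P_{Uu^{\perp}},P_{Uu}\}$. Either way $P_{\phi(u)}\in\{P_{Uu},P_{Uu^{\perp}}\}$, which means that $\phi(u)$ is a unimodular multiple of $Uu$ or of $Uu^{\perp}$; choosing $f(u)\in\T$ accordingly we obtain $\phi(u)\in\{f(u)Uu,f(u)Uu^{\perp}\}$, as claimed. (A different choice of the orthogonal vector $u^{\perp}$ only alters $Uu^{\perp}$ by a unimodular factor, which is absorbed into $f(u)$.) The only real subtlety is the circle-orbit issue of the first paragraph, and it is resolved by the $\{1,0\}$-dichotomy just exploited; everything else is a routine translation.
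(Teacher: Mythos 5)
Your argument is correct and is precisely the verification the paper omits: translate the hypothesis into property \eqref{E:*} via $\tr P_uP_v=|\langle u,v\rangle|^2$, apply Theorem \ref{T:proj}, and return to unit vectors. Your extra care with the phase ambiguity (using a section $s$ and the $\{0,1\}$-dichotomy for $|\langle\phi(u),\phi(\lambda u)\rangle|$, since the induced map on projections need not be well defined) is a genuine and correctly handled subtlety that the paper's one-line remark glosses over, but the route is the same.
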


After that we present the counterpart of Theorem \ref{T:proj} in higher dimensions which concerns only bijective mappings.

\begin{prop}\label{P:proj3}
Assume that $\dim \irH>2$ and let $\phi\colon \irP_1(\irH)\to \irP_1(\irH)$ be a bijection such that
\[
\big|\big|\big|[\phi(P),\phi(Q)]\big|\big|\big|=\big|\big|\big|[P,Q]\big|\big|\big|\quad(P,Q\in \irP_1(\irH)).
\]
Then there is a unitary or an antiunitary operator $U$ on $\irH$ such that
\begin{equation} \label{E:proj3}
\phi(P)=UPU^*\quad(P\in \irP_1(\irH)).
\end{equation}
\end{prop}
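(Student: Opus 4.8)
The plan is to reduce the statement to the classical theorem of Uhlhorn on orthogonality preservers, which is available precisely because $\dim\irH>2$. First I would note that the computation carried out just before the proof of Theorem~\ref{T:proj} is dimension-free: \eqref{mcd}, the spectral computation of $[A,x\otimes x]$, and \eqref{E:c} all make sense in an arbitrary $\irH$, so for any $P,Q\in\irP_1(\irH)$ one has
\[
\big|\big|\big|[P,Q]\big|\big|\big| = c\sqrt{\tr PQ-(\tr PQ)^2}
\]
with the constant $c>0$ of \eqref{E:c}. Since $x-x^2=y-y^2$ holds for $x,y\in[0,1]$ exactly when $y\in\{x,1-x\}$, the hypothesis on $\phi$ is equivalent to
\[
\tr\phi(P)\phi(Q)\in\{\tr PQ,\ 1-\tr PQ\}\qquad(P,Q\in\irP_1(\irH)),
\]
that is, to the obvious analogue of property \eqref{E:*} for $\irH$.

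Next I would extract orthogonality preservation in both directions. The number $\tr PQ-(\tr PQ)^2$ vanishes iff $\tr PQ\in\{0,1\}$, i.e.\ iff $P\perp Q$ or $P=Q$, and the same dichotomy applies to $\phi(P),\phi(Q)$. Therefore, for $P\neq Q$, the injectivity of $\phi$ (which gives $\phi(P)\neq\phi(Q)$) forces: $P\perp Q$ if and only if $\phi(P)\perp\phi(Q)$. Hence $\phi$ is a bijection of $\irP_1(\irH)$ onto itself preserving orthogonality in both directions.

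The decisive step is then to invoke Uhlhorn's theorem \cite{U}: for a Hilbert space of dimension at least three, every bijection of the set of rank-one projections that preserves orthogonality both ways is implemented by a unitary or an antiunitary operator $U$, so $\phi(P)=UPU^*$ for all $P\in\irP_1(\irH)$, which is \eqref{E:proj3}. For completeness I would remark that conversely any such $\phi$ satisfies the hypothesis, since $\tr UPU^*UQU^*=\tr PQ$. The only genuine point of care is the first reduction (checking that the identity for $\big|\big|\big|[P,Q]\big|\big|\big|$ is truly dimension-free); after that the proof is short, and in fact shorter than in the two-dimensional situation, because for $\dim\irH>2$ the complement $P^{\bot}$ of a rank-one projection is no longer of rank one, so the ``locally polynomial'' ambiguity of Theorem~\ref{T:proj} disappears and Uhlhorn's rigidity delivers the clean conclusion directly.
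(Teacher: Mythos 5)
Your proposal is correct and follows essentially the same route as the paper: establish that $\phi$ preserves orthogonality in both directions (the paper phrases this via commutativity -- two rank-one projections commute iff they are identical or orthogonal -- while you phrase it via the explicit formula $\big|\big|\big|[P,Q]\big|\big|\big|=c\sqrt{\tr PQ-(\tr PQ)^2}$, but the content is identical) and then invoke Uhlhorn's theorem, which is available precisely because $\dim\irH>2$. No gaps.
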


\begin{proof}
Observe that $\phi$ preserves commutativity in both directions. It is easy to check that two elements of $\irP_1(\irH)$ commute if and only if they are identical or orthogonal. We conclude that $\phi$ is such a bijection that preserves orthogonality in both directions, and hence a famous theorem of Uhlhorn (see \cite{U}) implies the existence of either a unitary or an antiunitary operator $U$ on $\irH$ such that \eqref{E:proj3} is fulfilled.
\end{proof}


In the next part of this section we present the proof of Theorem \ref{T:unit}. In order to do this, we will need the following assertion.

\begin{lem}\label{L:unit}
Let $V_1,V_2\in\irU(\irH)$ be such that
\begin{equation}\label{E:mc}
\big|\big|\big|[V_1,P]\big|\big|\big|=\big|\big|\big|[V_2,P]\big|\big|\big| \quad (P\in \irP_1(\irH)).
\end{equation}
Then there is a number $z\in\mathbb{T}$ for which $V_2=zV_1$ or $V_2=zV_1^*$ holds.
\end{lem}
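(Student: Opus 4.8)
\textbf{Proof plan for Lemma \ref{L:unit}.}

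The plan is to translate the hypothesis \eqref{E:mc}, which speaks about unitarily invariant norms of commutators with all rank-one projections, into a purely algebraic statement about the eigenvectors and eigenvalues of $V_1$ and $V_2$. The key computational input is formula \eqref{mcd}: for a self-adjoint $A$ (in particular, applying it to the unitary $V_j$ viewed as a normal operator, or more precisely to its real and imaginary parts) and a unit vector $x$, the quantity $|||[A, x\otimes x]|||$ equals $c\sqrt{\langle A^2 x, x\rangle - \langle Ax,x\rangle^2}$. Since $V_j$ is unitary rather than self-adjoint, I first want the analogue of \eqref{mcd} for unitaries. A direct computation shows $[V, x\otimes x] = (Vx)\otimes x - x\otimes(V^*x)$, and writing out the matrix of this rank-$\le 2$ operator on $\mathrm{span}\{x, Vx\}$ as in the proof preceding Theorem \ref{T:proj}, one finds that its nonzero singular values are determined by $\|Vx\|^2 - |\langle Vx, x\rangle|^2 = 1 - |\langle Vx,x\rangle|^2$. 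Hence $|||[V, x\otimes x]||| = c\sqrt{1 - |\langle Vx, x\rangle|^2}$ for every unit vector $x$. Consequently \eqref{E:mc} is equivalent to
\[
|\langle V_1 x, x\rangle| = |\langle V_2 x, x\rangle|\quad (x\in\irH,\ \|x\|=1).
\]

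Now the problem is reduced to a numerical-range style question: if two unitaries have the same modulus of diagonal entries in every orthonormal basis (equivalently, the same ``modulus of quadratic form''), then one is a unimodular multiple of the other or of its adjoint. The plan here is polarization. From $|\langle V_1 x,x\rangle| = |\langle V_2 x,x\rangle|$ for all unit $x$ we get $\langle V_1 x, x\rangle \overline{\langle V_1 y, y\rangle}$-type identities only after squaring; so I would write $|\langle V_j x, x\rangle|^2 = \langle V_j x, x\rangle\langle V_j^* x, x\rangle$ and apply the polarization identity in the variable $x$ (using $x \mapsto x + ty$, $x + ity$, etc.) to deduce that the two sesquilinear-in-each-slot forms $(x,y)\mapsto \langle V_1 x, y\rangle\langle V_1^* x, y\rangle$ and $(x,y)\mapsto\langle V_2 x, y\rangle\langle V_2^* x, y\rangle$ agree, hence (as forms on $\irH\otimes\irH$, or by fixing $y$ and varying) $V_1\otimes V_1^*$ and $V_2\otimes V_2^*$ coincide up to the obvious symmetrization. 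A cleaner route: fix an eigenvector-type analysis. Pick any unit vector $x$; the scalar $\langle V_1 x, x\rangle$ ranges over the numerical range of $V_1$, which for a unitary is the convex hull of its spectrum on the circle. I would instead argue as follows: the equality of $|\langle V_1 x,x\rangle|$ and $|\langle V_2 x,x\rangle|$ for all $x$, after the full polarization, forces $\langle V_1 x, y\rangle\overline{\langle V_1 z, w\rangle} = \langle V_2 x, y\rangle\overline{\langle V_2 z, w\rangle}$ for suitable combinations, and setting $z=x$, $w=y$ gives that the rank-one operators $V_1 x \otimes (V_1^* )^{?}$... — more transparently, fixing a unit vector $e$ and letting $x$ run, one gets that the map $V_1 V_2^{-1}$ sends each vector to a unimodular multiple of itself on a spanning set, forcing $V_1 V_2^{-1}$ (or $V_1 V_2^*$ after a possible conjugation) to be a scalar.

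Concretely, here is the cleaner endgame I would actually write. Fix a unit vector $e$ and set $z_0 = \langle V_2 e, e\rangle$; by multiplying $V_2$ by a suitable $\zeta\in\T$ (which does not change $|\langle V_2 x, x\rangle|$) we may assume $\langle V_1 e, e\rangle = \langle V_2 e, e\rangle$ when both are nonzero, and handle the vectors with $\langle V_j x,x\rangle=0$ by a density/limit argument. Then for a unit vector $y$ with $y\perp e$, apply the hypothesis to $x = (\alpha e + \beta y)/\sqrt{|\alpha|^2+|\beta|^2}$ for various $\alpha,\beta$ to extract, via polarization, that $|\langle V_1 e, y\rangle + \langle V_1 y, e\rangle| = |\langle V_2 e, y\rangle + \langle V_2 y, e\rangle|$ and similar with phases, ultimately pinning down $\langle V_1 x, y\rangle = \langle V_2 x, y\rangle$ for all $x,y$ (the case $V_2^*$ arising when the polarization forces the conjugate-linear alternative, e.g. when the only consistent choice is $\langle V_1 x,y\rangle = \overline{\langle V_1 y, x\rangle} = \langle V_1^* x, y\rangle$-type matching with $V_2$). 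Thus $V_2 = \zeta^{-1} V_1$ or, in the conjugate branch, $V_2 = \zeta^{-1} V_1^*$, which is the desired conclusion with $z = \zeta^{-1}\in\T$.

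The main obstacle I anticipate is the bookkeeping in the polarization step: from $|\langle V x, x\rangle|$ (a single real number for each $x$) one must recover the full sesquilinear form $\langle V x, y\rangle$ up to the genuine ambiguity ``$V$ vs $\overline{z}V$ vs $\overline{z}V^*$'', and one has to verify that no other possibilities (e.g. $V_2$ agreeing with $V_1$ on part of the space and with $V_1^*$ on the rest) survive — this requires using that $V_1, V_2$ are \emph{unitary} (so that $\|V_j x\| = \|x\|$, which is what made the singular-value computation collapse to $1 - |\langle V_j x,x\rangle|^2$ in the first place) and an irreducibility/connectedness argument to globalize the local choice, analogous to the Baire-category globalization used at the end of the proof of Theorem \ref{self-adjointlinear2dimthm}.
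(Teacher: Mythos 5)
Your first half coincides with the paper's: the formula $\big|\big|\big|[V,x\otimes x]\big|\big|\big|=c\sqrt{1-|\langle Vx,x\rangle|^2}$ is established the same way (the paper conjugates by $V$ to reduce to the self-adjoint operator $Vx\otimes Vx-x\otimes x$, but your singular-value route gives the same thing), and the reduction of \eqref{E:mc} to $|\langle V_1x,x\rangle|=|\langle V_2x,x\rangle|$ for all unit $x$, followed by polarization in $x$ (the paper substitutes $x=\zeta u+v$ and compares coefficients of the resulting polynomials in $\zeta,\overline{\zeta}$), leading to the identity $\langle V_1u,v\rangle\langle V_1^*u,v\rangle=\langle V_2u,v\rangle\langle V_2^*u,v\rangle$ for all $u,v\in\irH$, is exactly the paper's step.

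The gap is in the endgame, and it is the one you yourself flag as the ``main obstacle'' without resolving it. From the displayed identity one only gets, for each fixed $u$, a pointwise dichotomy: either $V_2u$ is proportional to $V_1u$ and $V_2^*u$ to $V_1^*u$, or $V_2u$ is proportional to $V_1^*u$ and $V_2^*u$ to $V_1u$, with proportionality constants that a priori depend on $u$. Passing from this to a single global alternative $V_2=zV_1$ or $V_2=zV_1^*$ with one fixed $z\in\T$ is the real content of the lemma, and your proposed tools do not deliver it: the phase normalization at a single vector $e$ plus ``density/limit'' for vectors with $\langle V_jx,x\rangle=0$ does not rule out mixed behaviour, and the Baire-category globalization you invoke by analogy with Theorem \ref{self-adjointlinear2dimthm} does not apply here, because the sets $\{u:V_2u\parallel V_1u\}$ and $\{u:V_2u\parallel V_1^*u\}$ are zero sets of quadratic (not linear) expressions in $u$ — they are closed but not subspaces, so having nonempty interior does not force one of them to be all of $\irH$. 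The paper closes this gap by a different device: it substitutes $u=V_1^*w$ to rewrite the identity as $\langle w,v\rangle\langle (V_1^*)^2w,v\rangle=\langle V_2V_1^*w,v\rangle\langle V_2^*V_1^*w,v\rangle$ for all $v,w$, and then applies the Lemma of Moln\'ar and Timmermann \cite{MT}, which says precisely that such an identity (with the plain inner product as one of the factors) forces $V_2V_1^*$ or $V_2^*V_1^*$ to be a scalar operator. You would need either to quote that lemma or to supply an equivalent rigidity argument; as written, your proof is incomplete at exactly this point.
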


\begin{proof}
Let $V\in\irU(\irH)$ be an operator and $x\in\irH$ be a unit vector. Then we have
\begin{equation}\label{mcp}
\big|\big|\big|[V,x\otimes x]\big|\big|\big|=c\sqrt{1-|\langle Vx,x\rangle|^2}
\end{equation}
where $c$ is the number from \eqref{E:c}. In order to verify \eqref{mcp}, first observe that since $|||.|||$ is unitarily invariant we have
\[
\big|\big|\big|[V,x\otimes x]\big|\big|\big|=|||V(x\otimes x)V^*-x\otimes x|||=|||Vx\otimes Vx-x\otimes x|||.
\]
Let $T := Vx\otimes Vx-x\otimes x$. If $Vx = \xi x$ holds with some $\xi\in\T$, then both sides of \eqref{mcp} is zero, because $\|x\|=1$. Otherwise, it is apparent that $\{x,Vx\}$ is a basis in $\rng T$. A straightforward calculation gives that with respect to this basis, the matrix of $T|_{\ran T}$ is precisely
\[
\left(\begin{array}{cc}
-1 & -\langle Vx,x\rangle \\
\langle x,Vx\rangle & 1
\end{array}\right).
\]
A rather elementary computation gives us $\sigma(T)\setminus\{0\} = \{\pm\sqrt{1-|\langle Vx,x\rangle|^2}\}$. By the spectral theorem and \eqref{E:c}, we deduce that \eqref{mcp} is satisfied.

Now, \eqref{mcp} and the condition \eqref{E:mc} in Lemma \ref{L:unit} easily yield that
\[
|\langle V_1x,x\rangle|^2=|\langle V_2x,x\rangle|^2
\]
is fulfilled for any unit vector and thus for each element $x\in\irH$ and this implies the following:
\[
\langle V_1x,x\rangle\langle x,V_1x\rangle=\langle V_2x,x\rangle\langle x,V_2x\rangle\quad(x\in\irH).
\]
Let $\zeta\in\mathbb{C}$ and $u,v\in\irH$ be any elements and put $x=\zeta u+v$ into this equality. The sides of the obtained equation can be written in the forms $p(\zeta,\overline{\zeta})$ and $q(\zeta,\overline{\zeta})$, respectively with some complex polynomials $p,q$ of two variables. It follows that $(p-q)(\zeta,\overline{\zeta}) = 0$ for all numbers $\zeta\in\C$. We know from \cite[p. 3860]{MT} that in this case all coefficients of $p-q$ has to be 0. Specifically, the coefficient of $\zeta^2$ vanishes which is precisely $\langle V_1u,v\rangle\langle u,V_1v\rangle-\langle V_2u,v\rangle\langle u,V_2v\rangle$. Therefore we have:
\[
\langle V_1u,v\rangle\langle V_1^*u,v\rangle=\langle V_2u,v\rangle\langle V_2^*u,v\rangle.
\]
Now let $w\in\irH$ and put $u=V_1^*w$ in this equality in order to obtain
\[
\langle w,v\rangle\langle (V_1^*)^2w,v\rangle=\langle V_2V_1^*w,v\rangle\langle V_2^*V_1^*w,v\rangle \quad (v,w\in\irH).
\]
Then by \cite[Lemma]{MT} we conclude that $V_2V_1^*$ or $V_2^*V_1^*$ is a scalar operator, therefore there is a number $z\in\mathbb{T}$ such that $V_2\in \{z V_1,z V_1^*\}$.
\end{proof}

Now, we are in a position to prove our statements concerning maps on $\irU(\irH)$. The symbol $\diam(\cdot)$ will denote the diameter of subsets of $\C$.

\begin{proof}[Proof of Theorem \ref{T:unit}]
We consider the two-dimensional case first and then the higher dimensional one.

\smallskip\smallskip

\noindent CASE I. \emph{When $\dim\irH = 2$.} Since $\phi$ is onto, it clearly leaves the quantity
\[
\theta(V)=\sup\{\big|\big|\big|[V,W]\big|\big|\big|\colon W\in\irU(\C^2)\}\quad(V\in\irU(\C^2))
 \]
invariant. In order to obtain a formula for this supremum let $V,W\in\irU(\C^2)$. Let the spectral decomposition of $W$ be $\alpha P+\beta P^{\bot} = (\alpha-\beta)P+\beta I$ with some $\alpha,\beta\in\mathbb{T}$ and $P\in \irP_1(\C^2)$. This implies $\big|\big|\big|[V,W]\big|\big|\big|=\diam\sigma(W)\big|\big|\big|[V,P]\big|\big|\big|$. Now let $x$ be a unit vector in $\rng P$. By \eqref{mcp} we obtain
\[
\big|\big|\big|[V,W]\big|\big|\big| = c\cdot\diam\sigma(W)\sqrt{1-|\langle Vx,x\rangle|^2}.
\]
Clearly as $W$ varies, the number $\diam\sigma(W)$ and the vector $x$ may vary independently. Moreover, when $W$ runs through $\irU(\C^2)$, the quantity $\diam\sigma(W)$ runs through $[0,2]$. Thus, using the last formula we easily conclude that
\[
\theta(V) = 2c\sqrt{1-(\inf\{|\langle Vx,x\rangle|\colon x\in\C^2,\ \Vert x\Vert=1\})^2}.
\]
Now let $\lambda_1,\lambda_2$ be the eigenvalues of $V$ counted according to their multiplicities. By the spectral theorem we obtain
\[
\inf\{|\langle Vx,x\rangle|\colon x\in\C^2,\ \Vert x\Vert=1\}=\inf\{|t\lambda_1+(1-t)\lambda_2|\colon t\in[0,1]\}.
\]
Since $\lambda_1,\lambda_2\in\mathbb{T}$, we get
\[
\inf\{|\langle Vx,x\rangle|\colon x\in\C^2,\ \Vert x\Vert=1\} = |\lambda_1+\lambda_2|/2 = |\tr V|/2,
\]
and therefore
\[
\theta(V) = c\sqrt{4-|\tr V|^2}.
\]

Since $\theta(V) = \theta(\phi(V))$ holds for every $V\in\irU(\C^2)$, we conclude that $|\tr\phi(V)| = |\tr V|$. In particular, $\phi$ leaves the set $U_0(\C^2) = \{V\in\irU(\C^2)\colon\tr V=0\}$ invariant. Since
\[
U_0(\C^2)=\{\xi(2P-I)\ |\ P\in \irP_1(\C^2),\ \xi\in\mathbb{T}\},
\]
we can find maps $\psi\colon \irP_1(\C^2)\to \irP_1(\C^2),\ \zeta\colon \irP_1(\C^2)\to\mathbb{T}$ such that
\[
\phi(2P-I)=\zeta(P)(2\psi(P)-I)\quad(P\in \irP_1(\C^2)).
\]
By \eqref{E:pres} we conclude that
\[
\big|\big|\big|[\psi(P),\psi(Q)]\big|\big|\big|=\big|\big|\big|[P,Q]\big|\big|\big|\quad(P,Q\in \irP_1(\C^2)).
\]
Now, applying Theorem \ref{T:proj} we get
\[
\psi(P)\in \{UPU^*,UP^{\bot}U^*\}\quad(P\in \irP_1(\C^2))
\]
where $U$ is either a unitary or an antiunitary operator on $\C^2$.

Finally, let us define the mapping
\[
\Phi\colon\irU(\C^2)\to\irU(\C^2), \; \Phi(V)=U^*\phi(V)U\ (V\in\irU(\C^2)).
\]
It is quite easy to see that
\begin{equation}\label{E:unitd2last}
\big|\big|\big|[\Phi(V),\Phi(W)]\big|\big|\big|=\big|\big|\big|[V,W]\big|\big|\big|\quad(V,W\in\irU(\C^2))
\end{equation}
and that for any $P\in \irP_1(\C^2)$ we have $\Phi(2P-I)=(\pm\zeta(P))(2P-I)$. Let $V\in\irU(\C^2)$ be a fixed operator and $P$ be an arbitrary rank-one projection on $\C^2$. Putting $W=2P-I$ into \eqref{E:unitd2last}, we obtain the following:
\[
\big|\big|\big|[\Phi(V),P]\big|\big|\big|=\big|\big|\big|[V,P]\big|\big|\big|.
\]
Since this holds for every element $P$ of $\irP_1(\C^2)$, Lemma \ref{L:unit} implies that there exists a number $z\in\mathbb{T}$ such that $\Phi(V) = z V$ or $\Phi(V) = z V^*$. The latter equality is satisfied by all $V\in\irU(\C^2)$, thus transforming back to $\phi$ we conclude that it is of the form appearing in Theorem \ref{T:unit}.

\smallskip\smallskip

\noindent CASE II. \emph{When $\aleph_0 \geq \dim\irH > 2$.}
We define $\overline{T} = (T^*)^{tr}\ (T\in\irB(\irH))$, where $^{tr}$ denotes the transpose of operators with respect to a fixed orthonormal basis in $\irH$. For every normal operator $N\in\irB(\irH)$ we have
\begin{equation}\label{E:conj}
\big|\big|\big|\overline{N}\big|\big|\big|=|||N|||.
\end{equation}
This equality can be proven easily by using the facts that $N$ is unitarily equivalent to a multiplication operator on an $L^2(\mu)$ space where $\mu$ is a measure on some measurable space $X$, and that $N$ is the product of a unitary and a positive operator (see \cite[Chapter IX.]{Co}).

Now observe that $\phi$ preserves commutativity. Hence, by the results published in \cite{MS} there exists a unitary or an antiunitary operator $U$ on $\irH$ such that for every $V\in\irU(\irH)$ we have a Borel function $f_V\colon\sigma(V)\to\T$ by which $\phi(V)=Uf_V(V)U^*$ is satisfied. We define the transformation $\psi\colon\irU(\irH)\to\irU(\irH)$, $\psi(V)=f_V(V)\ (V\in\irU(\irH))$. Observe that by \eqref{E:conj}, we have
\[
\big|\big|\big|[\psi(V_1),\psi(V_2)]\big|\big|\big|=\big|\big|\big|[V_1,V_2]\big|\big|\big| \quad (V_1,V_2\in\irU(\irH)).
\]

Next, let $P\in \irP_1(\irH)$ be arbitrary. By the definition of $\psi$ there are complex numbers $\alpha,\beta$ such that $\psi(2P-I)=\alpha P+\beta I$, moreover, since $\psi(2P-I)\in\irU(\irH)$, we deduce that $\alpha+\beta,\beta\in\mathbb{T}$. Therefore $|\alpha|\leq 2$. Now let $Q\in \irP_1(\irH)$ with $[P,Q]\ne0$. By the previous observations we have $\psi(2Q-I)=\alpha' Q+\beta' I$ with some complex numbers $\alpha',\beta'$ where $|\alpha'|\leq 2$. Since $\psi$ preserves the norm of the commutator, it follows that $4\big|\big|\big|[P,Q]\big|\big|\big|=(|\alpha\alpha'|)\big|\big|\big|[P,Q]\big|\big|\big|$ and thus $|\alpha||\alpha'|=4$ has to be true. We easily infer that $|\alpha|=2$. Now let $V\in\irU(\irH)$ be fixed. By the discussion above we have
\[
\begin{gathered}
2\big|\big|\big|[V,P]\big|\big|\big|=\big|\big|\big|[V,2P-I]\big|\big|\big|=|\alpha|\cdot\big|\big|\big|[\psi(V),P]\big|\big|\big|=2\big|\big|\big|[\psi(V),P]\big|\big|\big|
\end{gathered}
\]
and hence $\big|\big|\big|[\psi(V),P]\big|\big|\big|=\big|\big|\big|[V,P]\big|\big|\big|$ holds for all elements $P\in\irP_1(\irH)$. Applying Lemma \ref{L:unit} we get that there is a number $z\in\mathbb{T}$ such that $\psi(V)=zV$ or $\psi(V)=zV^*$. Since $V$ was an arbitrary element of $\irU(\irH)$, by transforming back to $\phi$ we easily complete this case.
\end{proof}

We finish this section by verifying our result concerning preservers on the set of density operators.

\begin{proof}[Proof of Theorem \ref{T:dense}]
For every operator $A\in\irS(\irH)$ we define
\[
\omega(A)=\sup\big\{\big|\big|\big|[A,B]\big|\big|\big| \colon B\in\irS(\irH)\big\}.
\]
Since $\phi$ is surjective, we clearly have
\begin{equation}\label{E:omega}
\omega(\phi(A))=\omega(A).
\end{equation}
In a very similar way as in Step 1 of the proof of \cite[Theorem 2]{N} and \cite[Theorem 3]{N} we deduce that
\[
\omega(A)=\sup\{\big|\big|\big|[A,x\otimes x]\big|\big|\big|\colon x\in\irH,\ ||x||=1\},
\]
and by \eqref{mcd} we infer
\[
\omega(A)=c\cdot\sup\{\sqrt{\langle A^2x,x\rangle-
\langle Ax,x\rangle^2}\colon x\in\irH,\ \Vert x\Vert=1\}\quad(A\in\irS(\irH)).
\]
Referring to \cite[Lemma 2.6.5]{Ml1} it follows that
\[
\omega(A)=\frac{c}{2}\cdot\diam\sigma(A)
\]
and using \eqref{E:omega} this yields the following:
\[
\diam\sigma(\phi(A))=\diam\sigma(A)\quad(A\in\irS(\irH)).
\]
Since for each $A\in\irS(\irH)$ we have $A\in \irP_1(\irH)$ exactly when the
diameter of the spectrum of $A$ equals 1, we conclude that $\phi$ preserves the set $\irP_1(\irH)$ in both directions. It follows that if $\dim\irH>2$, then $\phi|_{\irP_1(\irH)}\colon\irP_1(\irH)\to\irP_1(\irH)$ is a bijection. Now, according to the value of $\dim\irH$ we can apply Theorem \ref{T:proj} or Proposition \ref{P:proj3} in order to obtain the following: there exists a unitary or an antiunitary operator $U$ on $\irH$ such that if $\dim\irH = 2$, then
\begin{equation}\label{E:proj2d}
\phi(P)\in\{UPU^*,UP^{\bot}U^*\} \quad (P\in \irP_1(\irH)),
\end{equation}
and if $\dim\irH > 2$, then
\begin{equation}\label{E:projhd}
\phi(P) = UPU^* \quad (P\in \irP_1(\irH)).
\end{equation}

Define the transformation $\psi\colon\irS(\irH)\to\irS(\irH)$ by $\psi(A)=U^*\phi(A)U\ (A\in\irS(\irH))$. Then using \eqref{E:conj} and the fact that the commutator of density operators is normal it is easy to see that
\begin{equation}\label{E:denselast}
\big|\big|\big|[\psi(A),\psi(B)]\big|\big|\big|=\big|\big|\big|[A,B]\big|\big|\big|\quad(A,B\in\irS(\irH)).
\end{equation}
Now let $A\in\irS(\irH)$ be a fixed operator and $x\in\irH$ be an arbitrary unit vector. Setting $B=x\otimes x$ in \eqref{E:denselast} and referring to \eqref{E:proj2d} and \eqref{E:projhd} we conclude that
\[
\big|\big|\big|[\psi(A),x\otimes x]\big|\big|\big|=\big|\big|\big|[A,x\otimes x]\big|\big|\big|.
\]
This, by \eqref{mcd} gives us the following:
\[
\langle\psi(A)^2x,x\rangle-
\langle\psi(A)x,x\rangle^2=\langle A^2x,x\rangle-\langle Ax,x\rangle^2.
\]
Since this holds for an arbitrary unit vector $x\in\irH$, applying \cite[Proposition]{MT} we infer
\[
\psi(A)=\lambda I+\tau A,
\]
with some numbers $\lambda\in\mathbb{C}$ and $\tau\in\{-1,1\}$. As the range of $\psi$ is contained in $\irS(\irH)$, we must have $\psi(A)=A$ in the case when $\dim\irH=\infty$, and $\psi(A) \in \{ A, (2/\dim\irH)I-A\}$ if $\dim\irH<\infty$. Since $A\in\irS(\irH)$ was arbitrary, it follows that $\phi$ can be written in the desired form.
\end{proof}


\section{Final remarks}
In this paper we have given a partial answer to the question of Moln\'ar and Timmermann problem. Although the general question in two dimensions remained open. It seems to be an extremely hard problem.

We have also proved a theorem about a preserver problem concerning rank-one projections on $\C^2$. Its higher dimensional version in the bijective case was an easy consequence of Uhlhorn's celebrated generalization of the famous Wigner theorem. However, we do not know anything about the nonbijective case in higher dimensions. We point out that the characterization of those transformations on $\irP_1(\irH)$ which are not necessarily bijective and preserve orthogonality in both directions is unknown. It is certain that we cannot have a similar conclusion with linear or antilinear isometries instead of unitary or antiunitary operators (as in the non-bijective version of Wigner's theorem, see e.~g.~\cite{Ba,G,Gy,SA}). In fact, there are easy counterexamples of such injective transformations on $\irP_1(\irH)$ which preserve orthogonality in both directions and which are not induced by any linear or conjugate linear isometry. For example let us consider an infinite dimensional and separable Hilbert space $\irH$ with orthonormal basis $\{e_j\}_{j=1}^\infty$, and define the following transformation:
\[
\phi(P_{v}) = \left\{
\begin{matrix}
P_{S v} & \text{if } v\neq e_1\\
P_{\frac{1}{\sqrt{2}}(e_1+e_2)} & \text{if } v = e_1
\end{matrix}
\right.,
\]
where $S\in\irB(\irH)$, $Se_j = e_{j+1}\ (j\in\mathbb{N})$ is the usual unilateral shift operator. This implies that we cannot use the same technique for the nonbijective case in higher dimensions.

Concerning Theorems \ref{T:unit} and \ref{T:dense}, we used the bijectivity condition many times during their verifications. A reasonable question is to ask what happens if we drop this assumption.

Let us finish with posing a question. A reasonable measure of commutativity between invertible operators $A,B$ could be the quantity $|||I-A^{-1}B^{-1}AB|||$ which is the distance between the multiplicative commutator and the identity operator. As far as we know, transformations on a certain subclass of invertible operators which preserve this quantity has never been investigated. However, in our point of view, it is a relevant problem.

\bigskip

\section*{Acknowledgements.}
The authors are grateful to Prof.~L.~Moln\'ar for drawing their attention to some of the problems investigated in the paper. They are extremely grateful to the anonymous referee who did a careful reading and gave them many important suggestions.

This research was supported by the European Union and the State of Hungary, co-financed by the European Social Fund in the framework of T\'AMOP-4.2.4.A/2-11/1-2012-0001 'National Excellence Program'. The authors were also supported by the "Lend\" ulet" Program (LP2012-46/2012) of the Hungarian Academy of Sciences.

The second author was also supported by the Hungarian Scientific Research Fund (OTKA) Reg.No.~K81166 NK81402.

\bigskip

\bibliographystyle{amsplain}

\end{document}